\theoremstyle{plain}
\newtheorem{theorem}{Theorem}[section]
\newtheorem{lemma}[theorem]{Lemma}
\newtheorem{proposition}[theorem]{Proposition}
\theoremstyle{definition}
\newtheorem{remark}[theorem]{Remark}
\numberwithin{equation}{section}
\newcommand{\R}{{\mathbb R}}
\newcommand{\abs}[1]{{ \left \lvert #1 \right\rvert}}
\DeclareMathOperator{\argmin}{argmin}
\DeclareMathOperator{\rank}{rank}
\title{Local convergence of alternating low-rank optimization methods with overrelaxation}
\author{Ivan V. Oseledets\thanks{Skolkovo Institute of Science and Technology, Moscow, 121205 Russia} \and Maxim V. Rakhuba\thanks{HSE University, Moscow, 109028 Russia} \and Andr\'e Uschmajew\thanks{Max Planck Institute for Mathematics in the Sciences, 04103 Leipzig, Germany}
}
\date{}
\begin{document}

\maketitle

\begin{abstract}
The local convergence of alternating optimization methods with overrelaxation for low-rank matrix and tensor problems is established. The analysis is based on the linearization of the method which takes the form of an SOR iteration for a positive semidefinite Hessian and can be studied in the corresponding quotient geometry of equivalent low-rank representations. In the matrix case, the optimal relaxation parameter for accelerating the local convergence can be determined from the convergence rate of the standard method. This result relies on a version of Young's SOR theorem for positive semidefinite $2 \times 2$ block systems.
\end{abstract}

\section{Introduction}

We consider a low-rank matrix optimization problem of the form
\begin{equation}\label{eq: low rank optimization}
\min_{\rank (X) \le k} f(X),
\end{equation}
where $f : \R^{m \times n} \to \R$ is a smooth function on the space of real $m \times n$ matrices. It will be mostly assumed that $f$ is strongly convex. This generic problem appears in a large number of applications, where low-rank matrices serve as nonlinear model classes, such as in matrix recovery, or are employed for reducing numerical complexity when dealing \mbox{with large-scale matrices.}

Since the constraint set admits the explicit parametrization $X = UV^\top$, the problem can be rewritten as
\begin{equation}\label{eq: factorized optimization problem}
\min_{U \in \R^{m \times k}, \, V \in \R^{n \times k}} F(U,V) = f(UV^\top).
\end{equation}
One of the basic methods for solving~\eqref{eq: factorized optimization problem} is the alternating optimization (AO) method, which optimizes the factor matrices $U$ and $V$ in an alternating manner. Conceptually, ignoring the question of unique solvability of subproblems, the method looks as follows:
\begin{equation}\label{eq: block AO}
\begin{aligned}
U_{\ell + 1} &= \argmin_{U} F(U, V_\ell),  \\
V_{\ell + 1} &= \argmin_V F(U_{\ell+1}, V).
\end{aligned}
\end{equation}
While this is certainly a standard approach from the viewpoint of nonlinear optimization, where such a scheme is also known as nonlinear Gauss-Seidel method, it is worth emphasizing that the special structure of low-rank problems is particularly amenable to it. This is due to the bilinearity of the parametrization $U V^\top$, which turns the subproblems of~\eqref{eq: block AO} into optimization problems for the same initial function $f$, but on lower dimensional linear subspaces. Therefore, when $f$ is a quadratic function, this method is called the alternating least squares (ALS) method. 

While the study of global convergence of the AO method~\eqref{eq: block AO} is usually difficult, its local convergence properties are well-understood~\cite{Uschmajew2012,Rohwedder2013,Oseledets2018}. The local analysis is based on the fact that the linearized version of the method at a critical point $(U_*,V_*)$ takes the form of a block Gauss-Seidel method for the Hessian $\nabla^2 F(U_*,V_*)$. Due to the intrinsic overparametrization of rank-$k$ matrices by the representation $U V^\top$, the Hessian is at best positive semidefinite, but never positive definite. The Gauss-Seidel error iteration matrix is not contractive on the null space of the Hessian, but it turns out that this problem can be overcome by passing to the corresponding quotient geometry of equivalent low-rank representations $X = U V^\top$. This is possible thanks to an invariance of the AO method under changes of the representation. In fact, this invariance allows one to regard the method~\eqref{eq: block AO} as a well-defined iteration on the set of rank-$k$ matrices.

In this work, we consider the acceleration of the local convergence of the AO method~\eqref{eq: block AO} by means of overrelaxation. This is a classic idea in nonlinear optimization; see, e.g.,~\cite{Schechter1962, Ortega1966, Ortega1970, Hageman1975} to mention some early works. Several variants of such acceleration have been proposed for low-rank matrix problems, e.g., for matrix completion~\cite{Wen2012,Wang2019}. The basic overrelaxation method that we consider has already been proposed in~\cite{Grasedyck2015} for the more general low-rank tensor train format and in the matrix case reads as follows:
\begin{equation}\label{eq: block AO shift}
\begin{aligned}
U_{\ell + 1} &= (1-\omega) U_\ell + \omega \argmin_{U} F(U, V_\ell),  \\
V_{\ell + 1} &= (1-\omega) V_\ell + \omega \argmin_V F(U_{\ell+1}, V).
\end{aligned}
\end{equation}
Here $\omega > 0$ is a relaxation parameter, which sometimes is also called a shift. It can be observed numerically that a suitable choice of the shift significantly improves the convergence speed.

Our goal is to study the local convergence of this iteration for low-rank optimization in a similar spirit as for the plain AO method~\eqref{eq: block AO}, which corresponds to the case $\omega = 1$. This will be done in the main section~\ref{sec: low-rank AO}. The linearization of~\eqref{eq: block AO shift} at a critical point $(U_*,V_*)$ leads to a $2 \times 2$ block SOR method for the Hessian $\nabla^2 F(U_*, V_*)$. Using the fact that for $0 < \omega < 2$ and a positive semidefinite Hessian with positive definite block diagonal the SOR error iteration is contractive on any subspace complementary to the null space of the Hessian, we obtain local convergence results for this range of $\omega$. This result is stated in Theorem~\ref{thm: local convergence theorem}.

It is then natural to ask for the optimal shift $\omega$ achieving the fastest local convergence rate, which requires to minimize the spectral radius of the SOR error iteration matrix. For positive definite $2 \times 2$ block systems this can be achieved using a well-known theorem of Young. It is however possible to adjust the arguments to the positive semidefinite case, as will be done in Lemma~\ref{lem: SOR method}. This yields the expected, yet not entirely trivial, formula for the asymptotically optimal shift in terms of the convergence rate of the standard AO method with $\omega = 1$. The result is stated in Theorem~\ref{thm: optimal shift for AO}. In practice this means that the optimal shift can be estimated adaptively and at practically zero cost from the observed convergence rate of the standard method.

Of course, overrelaxation can also be applied to AO methods for low-rank tensor optimization. In section~\ref{sec: low-rank tensor problems} we focus on the low-rank tensor train format as in~\cite{Grasedyck2015}. Like low-rank matrix factorization, the tensor train decomposition is subject to an intrinsic overparametrization which can be described by a simple group action in parameter space, but leads to formally semidefinite Hessians in critical points. By passing to suitable quotient spaces, the local convergence of the method can be established in essentially the same way as for low-rank matrices (Theorem~\ref{thm: local convergence TT}). However, a main difference to the matrix case is that the formula for the optimal shift cannot be made rigorous under reasonable assumptions, although it can still serve as a useful heuristic.

In the last section~\ref{sec: numerical experiments} we report on some numerical experiments that illustrate the advantage of using shifts in low-rank AO methods, and validate our theoretical findings regarding the optimal shift in the matrix case. We also demonstrate the adaptive procedure for choosing an almost optimal shift based on the observed convergence rate of the standard method.

\section{Alternating optimization with relaxation for low-rank matrices}\label{sec: low-rank AO}

In this main section of the paper we first formalize the basic AO iteration~\eqref{eq: block AO} for low-rank matrix problems and recall some of its basic properties. We then proceed to the method with overrelaxation, establish its local convergence and determine the optimal shift parameter.

\subsection{Standard AO method}

Consider the scheme~\eqref{eq: block AO} and assume $f$ to be strongly convex. Then the first argmin is uniquely defined if $\rank(V_\ell) = k$, since it corresponds to minimizing the strongly convex function $f$ on a linear subspace of $\R^{m \times n}$ which is the image of the injective linear map $U \mapsto U V_\ell^\top$. Likewise, the second argmin is well-defined if $\rank(U_{\ell+1}) = k$ and returns the unique minimum of $f$ on the linear subspace $V \mapsto U_{\ell+1} V^\top$. Therefore, in some open and dense subsets both argmins define smooth maps $\hat S_1$ and $\hat S_2$, respectively, such that
\begin{equation}\label{eq: update AO}
U_{\ell + 1} = \hat S_1(V_\ell), \quad V_{\ell+1} = \hat S_2(U_{\ell+1}).
\end{equation}
One full update of the method then takes the form of a fixed point iteration
\begin{equation}\label{eq: AO as fpi}
\begin{pmatrix}
U_{\ell+1} \\ V_{\ell+1} 
\end{pmatrix}
= S \begin{pmatrix} U_\ell \\ V_\ell \end{pmatrix} \coloneqq \begin{pmatrix} \hat S_1(V_\ell) \\ \hat S_2(\hat S_1(V_\ell))
\end{pmatrix}.
\end{equation}
The map $S$ is well-defined and smooth on any open subset of
\[
\mathcal D = \{ (U,V) \in \R^{m \times k} \times \R^{n\times k} \colon \text{$V$ and $\hat S_1(V)$ have full column rank $k$} \}.
\]
In particular, any critical point $(U_*,V_*)$ of $F$ in~\eqref{eq: factorized optimization problem} for which $U_*$ and $V_*$ have full column rank belongs to $\mathcal D$ and is a fixed point of $S$. To see this, note that $U \mapsto F(U,V_*) = f(UV_*^\top)$ is strongly convex since $U \mapsto U V_*^\top$ is injective. Since $U_*$ is a critical point of that function, it is the global minimum and hence $\hat S_1(V_*) = U_*$. The argument for $V_*$ is the same. Such a critical point of $F$ possesses an open neighborhood in $\mathcal D$ in which $S$ is well-defined and smooth. Conversely, any fixed point $(U_*,V_*) \in \mathcal D$ of $S$ must be a critical point of $F$ since it implies that $U_*$ is the global minimum of $U \mapsto F(U,V_*)$ and $V_*$ is the global minimum of $V \mapsto F(U_*,V)$. Hence the partial gradients $\nabla_U F(U_*,V_*)$ and $\nabla_V F(U_*,V_*)$ are both zero.

By passing from the initial constrained problem~\eqref{eq: low rank optimization} to the factorized problem~\eqref{eq: factorized optimization problem}, we formally introduced an ambiguity arising from the fact that the factorization $X = UV^\top$ of a rank-$k$ matrix is not unique. In particular, $X = UAA^{-1}V^\top$ for any invertible $k \times k$ matrix $A$ so that the function $F$ has level sets of at least dimension $k^2$ (when $U,V$ have full column rank). Therefore, a fixed point $(U_*,V_*) \in \mathcal D$ of $S$ is never locally unique. However, this issue of non-uniqueness is only a formal one since one is ultimately interested in the sequence of generated matrices $X_\ell^{} = U_\ell^{} V_\ell^\top$. Assuming $\rank(X_\ell) = k$ for all $\ell$, this sequence is not affected by any reparametrization $(U_\ell^{},V_\ell^{}) \to (U_\ell^{} A_\ell^{}, V_\ell^{} A^{-T}_\ell)$ with invertible matrices $A_\ell$ during the iteration. This is due to the invariance properties
\begin{equation}\label{eq: S1 S2 invariance}
\begin{aligned}
\hat S_1(VA^{-T}) &= \hat S_1(V) A, \\ 
\hat S_2(UA) &= \hat S_2(U) A^{-T}
\end{aligned}
\end{equation}
of the maps $\hat S_1$ and $\hat S_2$, which hold whenever $U$ and $V$ have full column rank. To see this, let $U_+ = \hat S_1(V)$ and $\hat U_+ = \hat S_1(VA^{-T})$. Then by construction $U_+ V^\top$ and $\hat U_+ A^{-1} V^\top$ are the unique minimizers of $f$ on the linear subspaces $\{ U V^\top \colon U \in \R^{m \times k}\}$ and $\{ U A^{-1}V^\top \colon U \in \R^{m \times k}\}$, respectively. Obviously both spaces are equal, hence $U_+ V^\top = \hat U_+ A^{-1} V^\top$. Since $V$ has full column rank, we obtain $U_+ = \hat U_+ A^{-1}$, the first identity in~\eqref{eq: S1 S2 invariance}. The argument for $\hat S_2$ is analogous. 

The above invariance of the AO method allows us to interpret it as a method
\[
X_{\ell+1} = \mathbf{S}(X_\ell)
\]
in the full matrix space $\R^{m \times n}$, or more precisely on the subset of matrices of rank at most~$k$. This viewpoint has been taken in~\cite{Oseledets2018} and will be helpful in this work, too. From an algorithmic perspective, the AO viewpoint~\eqref{eq: AO as fpi} is more useful since it operates on the smaller matrices $U$ and~$V$ instead of the full matrix $X$. Furthermore, the invariance with respect to the described change of parametrization allows for a robust implementation of the AO method by orthogonalizing the columns of $U_\ell$ and $V_\ell$ after every partial update, without affecting the generated sequence $X_\ell$ of matrices. This method is a special case of Algorithm~\ref{alg:als_qr} below with $\omega = 1$.

\subsection{Overrelaxation}

Instead of~\eqref{eq: update AO} we now consider the more general update rule with a shift,
\begin{equation}\label{eq:AO with relaxation}
\begin{aligned}
U_{\ell+1} &= (1-\omega) U_\ell + \omega \hat S_1(V_\ell),\\
V_{\ell+1} &= (1-\omega) V_\ell + \omega \hat S_2(U_{\ell+1}),
\end{aligned}
\end{equation}
which corresponds to~\eqref{eq: block AO shift}. For $\omega = 1$, this iteration equals the standard AO method~\eqref{eq: update AO}. By defining the map
\begin{equation}\label{eq: map Somega}
S_\omega \begin{pmatrix} U \\ V \end{pmatrix} = (1-\omega) \begin{pmatrix} U \\ V \end{pmatrix} + \omega \begin{pmatrix} \hat S_1(V) \\ \hat S_2((1-\omega) U + \omega \hat S_1(V)) \end{pmatrix},
\end{equation}
we can write~\eqref{eq:AO with relaxation} as a nonlinear fixed point iteration
\begin{equation}\label{eq: overrelaxation as fpi}
\begin{pmatrix} U_{\ell+1} \\ V_{\ell+1} \end{pmatrix} = S_\omega \begin{pmatrix} U_\ell \\ V_\ell \end{pmatrix}.
\end{equation}
The map $S_\omega$ is well-defined and smooth on any  open subset of 
\[
\mathcal D_\omega = \{ (U,V) \in \R^{m \times k} \times \R^{n\times k} \colon \text{$V$ and $(1-\omega)U + \omega \hat S_1(V)$ have full column rank} \}.
\]
Note that $(U_*,V_*) \in \mathcal D_\omega$ is a fixed point of $S_\omega$ if and only if $(U_*,V_*) \in \mathcal D$ and $(U_*,V_*)$ is a fixed point of $S$. In particular, any critical point $(U_*,V_*)$ of $F$ in~\eqref{eq: factorized optimization problem} such that $U_*$ and $V_*$ have full column rank belongs to $\mathcal D_\omega$ and is a fixed point of $S_\omega$. Moreover, any such critical point possesses an open neighborhood in $\mathcal D_\omega$ such that $S_\omega$ is well-defined and smooth on this neighborhood. Again, the converse is also true, that is, a fixed point $(U_*,V_*) \in \mathcal D_{\omega}$ of $S_\omega$ is a critical point of $F$.

The iteration~\eqref{eq: overrelaxation as fpi} exhibits the same invariance under changes of representation as the standard AO method. Let $(U,V) \in \mathcal D_\omega$ and $(U_+,V_+) = S_\omega(U,V)$, then from~\eqref{eq: map Somega} and~\eqref{eq: S1 S2 invariance} one verifies
\begin{equation}\label{eq: invariance}
S_\omega \begin{pmatrix} UA \\ VA^{-T} \end{pmatrix} = \begin{pmatrix} U_+ A \\ V_+ A^{-T} \end{pmatrix}.
\end{equation}
Therefore, the generated sequence $X_\ell^{} = U_\ell^{} V_\ell^\top$ is essentially (if $\rank(X_\ell) = k$ for all $\ell$) invariant under changes of representation during the iteration. In particular, QR decomposition can be used in numerical implementation for keeping the argmin problems well-conditioned. The resulting method is denoted in Algorithm~\ref{alg:als_qr}.

\begin{algorithm}
\caption{Low-rank AO with overrelaxation and QR}\label{alg:als_qr}
\KwData{$V_0\in\mathbb{R}^{n\times k}$, relaxation parameter $\omega$}
\For{$\ell = 0, 1, 2, \dots$}{
   $U \leftarrow \argmin_{\hat U \in\mathbb{R}^{m\times k}} F(\hat U, V_\ell)$\;
   $U \leftarrow (1-\omega) U_\ell + \omega U$, \quad $U = Q_1 R_1$\;
   $V \leftarrow \argmin_{\hat V\in \mathbb{R}^{n\times k}} F(Q_1, \hat V)$\;
   $V \leftarrow (1-\omega) V_\ell^{} R_1^\top + \omega V$, \quad $V = Q_2 R_2$\;
   $U_{\ell+1} \coloneqq Q_1^{} R_2^\top$, \quad $V_{\ell+1} \coloneqq Q_2$\;
}
\end{algorithm}

Since the goal of this work is a local convergence analysis of the fixed point iteration~\eqref{eq: overrelaxation as fpi}, it is important to observe that the invariance under the group action also carries over to the asymptotic linear convergence rate of the method, which depends on the eigenvalues of the derivative $S_\omega'(U_*,V_*)$ at a fixed point $(U_*,V_*) \in \mathcal D_
\omega$. To see this invariance it is convenient to introduce the corresponding group action $\theta_A$ of $\mathrm{GL}(k)$ acting on $\R^{m \times k} \times \R^{n \times k}$ via
\begin{equation}\label{eq: group action}
A \mapsto \theta_A \cdot \begin{pmatrix} U \\ V \end{pmatrix} = \begin{pmatrix} UA \\ VA^{-T} \end{pmatrix}.
\end{equation}
In this notation~\eqref{eq: invariance} reads
\begin{equation}\label{eq: invariance for Somega}
S_\omega \left(\theta_A \cdot \begin{pmatrix} U\\ V \end{pmatrix} \right) = \theta_A \cdot S_\omega \begin{pmatrix} U \\ V \end{pmatrix}.
\end{equation}
For fixed $A$,~\eqref{eq: group action} defines an invertible linear map $\theta_A$ on $\R^{m \times k} \times \R^{n \times k}$ with $\theta_A^{-1} = \theta_{A^{-1}}^{}$. Differentiating both sides of~\eqref{eq: invariance for Somega} it then follows that
\begin{equation}\label{eq: similarity of derivatives}
S_\omega' \begin{pmatrix} U_* A \\ V_* A^{-T} \end{pmatrix} = \theta_A \cdot S_\omega'\begin{pmatrix} U_* \\ V_* \end{pmatrix} \cdot \theta_{A}^{-1}.
\end{equation}
This shows that $S_\omega' ( U_* A, V_* A^{-T} )$ has the same eigenvalues as $S_\omega'( U_*, V_* )$ and allows us to study the local convergence rate of the iteration~\eqref{eq: overrelaxation as fpi} at any particular fixed point $(U_*, V_*)$.

As for the standard AO method, the invariance property allows for an interpretation of the method~\eqref{eq: overrelaxation as fpi} as an iteration
\begin{equation}\label{eq: iteration in full space}
X_{\ell + 1} = \mathbf{S}_\omega(X_\ell)
\end{equation}
on the manifold
\[
\mathcal M_k = \{ X \in \R^{m \times n} \colon \rank(X) = k \},
\]
where $\mathbf S_\omega \colon \mathcal O \subseteq \mathcal M_k \to \R^{m \times n}$ is defined through
\begin{equation}\label{eq: bold S omega}
\mathbf S_\omega(X) = \tau (S_\omega(U,V)), \quad X = U V^\top,
\end{equation}
with the map
\[
\tau(U,V) = U V^\top.
\]
Here the domain of definition $\mathcal O$ of $\mathbf{S}_\omega$ should be contained in the image of $\mathcal D \cap \mathcal D_\omega$ under $\tau$. In particular, let $(U_*,V_*) \in \mathcal D$ be a fixed point of the map $S$ (the standard AO method), that is, a critical point of $F$. Then $\mathbf{S}_\omega$ is well-defined and smooth in some open neighborhood $\mathcal O \subseteq \mathcal M_k$ of $X_*^{} = U_*^{} V_*^\top$ and $X_*$ is a fixed point of $\mathbf{S}_\omega$. This manifold viewpoint will be useful in the local convergence analysis conducted in the next section.

\subsection{Local convergence}

Let $X_*^{} = U_*^{} V_*^\top \in \mathcal M_k$ be a fixed point of $\mathbf S_\omega$. Then $\mathbf S_\omega$ locally maps to $\mathcal M_k$ and thus the derivative $\mathbf S'(X_*)$ maps the tangent space $T_{X_*} \mathcal M_k$ to itself. This provides the following local convergence criterion.

\begin{proposition}\label{prop: convergence criterion}
Let $f$ be strongly convex and $(U_*,V_*)$ be a critical point of $F$ in~\eqref{eq: factorized optimization problem} with $U_*, V_*$ having full column rank $k$. Then $(U_*,V_*)$ is a fixed point of $S_\omega$ and $X_*^{} = U_*^{} V_*^\top \in \mathcal M_k$ is a fixed point of $\mathbf{S}_\omega$. Let $\mathbf S_\omega' (X^*) \colon T_{X_*} \mathcal M_k \to \R^{m \times n}$ denote the derivative of $\mathbf S_\omega$ at $X_*$, and $\mathbf P_{X_*} \colon \R^{m \times n} \to T_{X_*} \mathcal M_k$ the tangent space projection. If for the spectral radius
\begin{equation}\label{eq: convergence criterion}
\rho_\omega = \rho( \mathbf P_{X_*} \mathbf S_\omega' (X_*)) < 1,
\end{equation}
then for $X_0^{} = U_0^{} V_0^\top$ close enough to $X_*$ the iterates $X_\ell^{} = U_\ell^{} V_\ell^\top$ generated by Algorithm~\ref{alg:als_qr} converge to $X_*$ at an asymptotic linear rate $\rho_\omega$.
\end{proposition}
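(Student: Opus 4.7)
The fixed-point assertions in the first two sentences of the proposition are immediate consequences of the discussion preceding the statement: strong convexity forces the partial argmins to be attained at $(U_*,V_*)$, so $S_\omega(U_*,V_*) = (U_*,V_*)$, and applying $\tau$ yields $\mathbf S_\omega(X_*) = X_*$. The substantive content is the convergence statement, which I would prove by reducing to the standard Ostrowski-type local convergence theorem for smooth fixed-point iterations on a finite-dimensional manifold.

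First I would check that the manifold iteration is well set up. Since $(U_*,V_*)$ lies in the interior of $\mathcal D \cap \mathcal D_\omega$ and both $\hat S_1$, $\hat S_2$ are smooth there, $S_\omega$ is smooth on a neighborhood of $(U_*,V_*)$ and produces factors of full column rank $k$ nearby. Combined with the invariance~\eqref{eq: invariance for Somega} under the group action~\eqref{eq: group action}, this makes $\mathbf S_\omega$ a well-defined smooth self-map of an open neighborhood $\mathcal O \subseteq \mathcal M_k$ of $X_*$ with $\mathbf S_\omega(X_*) = X_*$.

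Next I would identify the derivative. Because $\mathbf S_\omega$ takes values in $\mathcal M_k$, the intrinsic derivative $\mathbf S'_\omega(X_*)$ maps $T_{X_*}\mathcal M_k$ into itself, so the projection $\mathbf P_{X_*}$ acts as the identity on its range. Consequently the number $\rho_\omega$ defined in~\eqref{eq: convergence criterion} coincides with the spectral radius of $\mathbf S'_\omega(X_*)$ viewed as an endomorphism of $T_{X_*}\mathcal M_k$. Assuming $\rho_\omega < 1$, I would then choose a norm on $T_{X_*}\mathcal M_k$ in which this endomorphism has operator norm strictly below $1$, and use smoothness together with a Taylor expansion in local coordinates on $\mathcal M_k$ around $X_*$ to conclude that iterates $X_{\ell+1} = \mathbf S_\omega(X_\ell)$ starting sufficiently close to $X_*$ remain in $\mathcal O$, converge to $X_*$, and satisfy $\limsup_{\ell \to \infty} \|X_\ell - X_*\|^{1/\ell} \le \rho_\omega$.

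Finally, I would verify that the sequence $X_\ell = U_\ell^{} V_\ell^\top$ generated by Algorithm~\ref{alg:als_qr} agrees with the manifold iteration above, so that the rate transfers. By the invariance~\eqref{eq: invariance for Somega}, the QR reparametrization steps inside the algorithm only move $(U_\ell,V_\ell)$ along orbits of $\theta_A$, leaving the product $U_\ell^{} V_\ell^\top$ unchanged; hence Algorithm~\ref{alg:als_qr} realizes exactly $X_{\ell+1} = \mathbf S_\omega(X_\ell)$ on the matrix level. The step I expect to be most delicate is showing that the projection $\mathbf P_{X_*}$ in~\eqref{eq: convergence criterion} is truly innocuous, i.e.\ that no spurious eigenvalues arise from normal directions; this rests on the fact that $\mathbf S_\omega$ is a genuine self-map of $\mathcal M_k$ near $X_*$, which in turn follows from smoothness together with the rank-preserving behavior of $S_\omega$ on full column rank factorizations.
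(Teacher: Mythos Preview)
Your proposal is correct and supplies exactly the standard Ostrowski-type argument on the manifold $\mathcal M_k$; the paper itself does not give an explicit proof of this proposition but states it as a known local convergence criterion, relying implicitly on precisely the ingredients you list (smoothness of $\mathbf S_\omega$ near $X_*$, the invariance~\eqref{eq: invariance for Somega} so that Algorithm~\ref{alg:als_qr} realizes $\mathbf S_\omega$, and the fact that $\mathbf S_\omega$ locally maps into $\mathcal M_k$). Your emphasis on the last point---that $\mathbf S'_\omega(X_*)$ already maps $T_{X_*}\mathcal M_k$ into itself so that $\mathbf P_{X_*}$ is redundant---matches the paper's one-sentence remark immediately preceding the proposition.
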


To study the convergence criterion in more detail we investigate $\mathbf S_\omega' (X_*)$. For this we repeat some well-known computations. We first consider the map $S_\omega$ in parameter space. From~\eqref{eq: map Somega}, its derivative at $(U_*,V_*)$ takes the form of a block matrix
\[
S_\omega' \begin{pmatrix} U_* \\ V_* \end{pmatrix} = (1-\omega) \begin{pmatrix} I & 0 \\ 0 & I \end{pmatrix} + \omega \begin{pmatrix} 0 & \hat S_1'(V_*) \\ (1-\omega) \hat S_2'(U_*) & \omega \hat S_2'(U_*) \hat S_1'(V_*) \end{pmatrix},
\]
where we have used that $(1-\omega) U_* + \omega \hat S_1(V_*) = U_*$. Setting
\[
L = \begin{pmatrix} 0 & 0 \\ \hat S_2'(U_*) & 0\end{pmatrix}, \quad R = \begin{pmatrix} 0 & \hat S_1'(V_*) \\ 0 & 0 \end{pmatrix}
\]
one then verifies the identity
\begin{equation}\label{eq: DSomega}
S_\omega'\begin{pmatrix} U_* \\ V_* \end{pmatrix} = (I - \omega L)^{-1}[ (1-\omega)I + \omega R].
\end{equation}
This linear operator can be interpreted as a block SOR error iteration matrix for the Hessian
\[
H = \begin{pmatrix} \nabla_{UU}F(U_*,V_*) & \nabla_{UV}F(U_*,V_*) \\ \nabla_{VU}F(U_*,V_*) & \nabla_{VV}F(U_*,V_*) \end{pmatrix}
\]
of $F$ at $(U_*,V_*)$, written in $2 \times 2$ block form. To see this, consider the usual decomposition
\[
H = D + E + E^\top
\]
with
\[
D = \begin{pmatrix} \nabla_{UU}F(U_*,V_*) & 0 \\ 0 & \nabla_{VV}F(U_*,V_*) \end{pmatrix},  \quad E = \begin{pmatrix} 0 & 0 \\ \nabla_{VU}F(U_*,V_*) & 0  \end{pmatrix}.
\]
Assuming that the block diagonal part $D$ is invertible and differentiating the equations
\begin{equation}\label{eq: implicit definition S1 S2}
\nabla_U F(\hat S_1(V),V) = 0, \quad \nabla_V F(U, \hat S_2(U)) = 0
\end{equation}
(which implicitly define $\hat S_1$ and $\hat S_2$), one finds that
\[
\hat S_1'(V_*) = - [\nabla_{UU} F(U_*,V_*)]^{-1} \nabla_{UV} F(U_*,V_*)
\]
and
\[
\hat S_2'(U_*) = - [\nabla_{VV} F(U_*,V_*)]^{-1} \nabla_{VU} F(U_*,V_*).
\]
In other words, $L = -D^{-1}E$, $R = -D^{-1}E^\top$, and
\begin{equation*}\label{eq: L+R}
L + R = I - D^{-1} H.
\end{equation*}
Using the expressions for $L$ and $R$ in~\eqref{eq: DSomega}, one obtains the alternative formula
\begin{equation}\label{eq: derivative in parameter space}
S_\omega'\begin{pmatrix} U_* \\ V_* \end{pmatrix} = T_\omega \coloneqq I - N_\omega^{-1} H, \quad N_\omega = \frac{1}{\omega}D + E.
\end{equation}

We see from~\eqref{eq: derivative in parameter space} that $S_\omega'(U_*, V_*)$ equals the error iteration matrix $T_\omega$ for the two-block SOR method for $H$. It is well-known that $T_\omega$ has spectral radius less than one if $0 < \omega < 2$ and $H$ is positive definite. However, the latter is never the case here. Since $\nabla F$ is constantly zero on the orbit $\theta_A \cdot (U_*,V_*)$ (this follows from the chain rule by differentiating $F = F \circ \theta_A$ for fixed $A$), the Hessian at critical points $(U_*,V_*) \in \mathcal D$ has at least a $k^2$-dimensional kernel $\ker H$ containing the tangent space to the orbit. On $\ker H$ the matrix $T_\omega$ acts as identity. However, if $0 < \omega < 2$, $D$ is positive definite and $H$ is at least positive semidefinite, then by classic results it still holds that $T_\omega$ is a contraction on any invariant subspace complementary to $\ker H$; see, e.g.,~\cite[Sec.~3]{Weissinger1953} or~\cite[Corollary~2.1]{Keller1965}. Specifically, as follows from~\cite{Weissinger1953}, the space
\begin{equation}\label{eq: invariant subspace}
\mathcal W_\omega = N_\omega^{-1} (\ker H)^\perp
\end{equation}
is an invariant subspace of $T_\omega$, which splits the parameter space into a direct sum\footnote{Obviously, $\mathcal W_\omega$ is an invariant subspace of $T_\omega$ and has the correct dimension. To see that it is complementary to $\ker H$, note that any $x \in \mathcal W_\omega$ satisfies $N_\omega x \in (\ker H)^\perp$. If $x \in \ker H$, one verifies
 $0 = \langle x, N_\omega x \rangle = (\frac{1}{\omega} - \frac{1}{2}) \langle x, D x \rangle $, which under the given assumptions implies $x=0$.\label{footnote 1}}
\begin{equation}\label{eq: splitting}
\R^{m \times k} \times \R^{n \times k} = \ker H \oplus \mathcal W_\omega,
\end{equation}
and $T_\omega$ is a contraction on $\mathcal W_\omega$.

At this point we can exploit that we are actually interested in the convergence of the products $X_\ell^{} = U_\ell^{} V_\ell^\top$. Under the assumption that $\ker H$ equals the tangent space to the orbit $\theta_A \cdot ( U_*, V_* )$, any complementary subspace, such as $\mathcal W_\omega$, satisfies the properties of a so-called horizontal space for the quotient manifold structure of $\mathcal M_k$. For us, this means the following.

\begin{proposition}\label{prop: tau as diffeomorphism}
Assume $X_*^{} = U_*^{} V_*^\top$ has rank $k$, $H$ is positive semidefinite, $\dim (\ker H) = k^2$ and a decomposition~\eqref{eq: splitting} holds. Then the map $\tau(U,V) = UV^\top$ is a local diffeomorphism between a (relative) neighborhood of $(U_*,V_*)$ in $(U_*,V_*) + \mathcal W_\omega$ and a neighborhood of $X_*$ in the embedded submanifold $\mathcal M_k \subseteq \R^{m \times n}$.
\end{proposition}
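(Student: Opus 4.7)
The plan is to prove this via the inverse function theorem, applied to $\tau$ restricted to the affine subspace $(U_*,V_*) + \mathcal W_\omega$. The argument reduces to three ingredients: computing $\ker \tau'(U_*,V_*)$, identifying it with $\ker H$, and matching dimensions with $T_{X_*}\mathcal M_k$.

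First I would compute the derivative $\tau'(U_*,V_*)(\dot U, \dot V) = \dot U V_*^\top + U_* \dot V^\top$. Using that $U_*$ and $V_*$ have full column rank $k$, a short calculation shows
\[
\ker \tau'(U_*,V_*) = \left\{ \left(U_*\Omega,\, -V_* \Omega^\top\right) : \Omega \in \R^{k \times k} \right\},
\]
which has dimension exactly $k^2$ and coincides with the tangent space at $A = I$ of the orbit $A \mapsto \theta_A \cdot (U_*,V_*)$. This matches the standard fact that $T_{X_*}\mathcal M_k$ has dimension $k(m+n-k)$, so that the rank of $\tau'(U_*,V_*)$ is $k(m+n) - k^2 = k(m+n-k)$, and moreover its image is contained in, hence equal to, $T_{X_*}\mathcal M_k$ (since locally around $(U_*,V_*)$ the product $UV^\top$ has rank $k$ and $\tau$ maps into $\mathcal M_k$).

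Second, the invariance $F \circ \theta_A = F$ for all $A \in \mathrm{GL}(k)$, which follows from $(U A)(V A^{-T})^\top = UV^\top$, implies by differentiation at a critical point that the tangent space of the orbit lies in $\ker H$. Combined with the assumption $\dim \ker H = k^2$, this forces $\ker H = \ker \tau'(U_*,V_*)$. The hypothesized splitting~\eqref{eq: splitting} then becomes $\R^{m\times k} \times \R^{n \times k} = \ker \tau'(U_*,V_*) \oplus \mathcal W_\omega$, so the restriction of $\tau'(U_*,V_*)$ to $\mathcal W_\omega$ is injective; since $\dim \mathcal W_\omega = k(m+n-k) = \dim T_{X_*}\mathcal M_k$, it is a linear isomorphism onto $T_{X_*}\mathcal M_k$.

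Finally, I would apply the inverse function theorem to the smooth map $\tau$ restricted to (a neighborhood of $(U_*,V_*)$ in) the affine subspace $(U_*,V_*) + \mathcal W_\omega$, read off in a local chart of the embedded submanifold $\mathcal M_k$ around $X_*$. Since source and target have the same dimension and the derivative is an isomorphism, this restriction is a local diffeomorphism onto a neighborhood of $X_*$ in $\mathcal M_k$, which is the claim. The main obstacle, modulo routine calculations, is the identification $\ker H = \ker \tau'(U_*,V_*)$: the invariance argument only gives one inclusion, and the hypothesis $\dim \ker H = k^2$ is essential to promote this to equality so that $\mathcal W_\omega$ becomes a genuine horizontal complement and not just a complement inside $\ker H$.
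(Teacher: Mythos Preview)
Your proposal is correct and follows essentially the same route as the paper's proof: both identify $\ker H$ with the tangent space to the $\theta_A$-orbit via the invariance $F\circ\theta_A=F$ together with the dimension hypothesis $\dim(\ker H)=k^2$, then use the splitting~\eqref{eq: splitting} and a dimension count to see that $\tau'(U_*,V_*)$ restricts to a linear isomorphism $\mathcal W_\omega \to T_{X_*}\mathcal M_k$, and conclude by the inverse function theorem. The only cosmetic difference is that you compute $\ker\tau'(U_*,V_*)$ explicitly and show it equals the orbit tangent space, whereas the paper argues the equivalent fact by noting that $\tau'(U_*,V_*)$ has rank $mk+nk-k^2$ and vanishes on the orbit tangent space.
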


\begin{proof}
The proof can be given without particular reference to quotient manifolds, but assuming knowledge that $\mathcal M_k$ is a smooth embedded submanifold of dimension $mk + nk - k^2$~\cite[Ex.~8.14]{Lee2003}, and $\tau$ is a local submersion on $\mathcal M_k$ in a neighborhood of $(U_*,V_*)$ (it is not difficult to verify that $\tau'(U_*,V_*)$ has rank $mk + nk - k^2$). Then since $\tau$ is constant on the $\theta_A$-orbit of $(U_*,V_*)$, its derivative vanishes on the tangent space to that orbit at $(U_*,V_*)$, which is of dimension $k^2$. We already noted that $\ker H$ contains that tangent space, so if $\dim (\ker H) = k^2$, then $\tau'(U_*,V_*)$ vanishes on $\ker H$. Hence, due to~\eqref{eq: splitting}, $\tau'(U_*,V_*)$ must be a bijection between $\mathcal W_\omega$ and the tangent space $T_{X_*} \mathcal M_k$. The assertion follows by the inverse function theorem.
\end{proof}

We are now in the position to formulate a local convergence result for the iteration~\eqref{eq: iteration in full space}. 

\begin{theorem}\label{thm: local convergence theorem}
Let $f$ be strongly convex and $(U_*,V_*)$ be a critical point of $F$ in~\eqref{eq: factorized optimization problem} with $U_*, V_*$ having full column rank $k$. Assume that the Hessian $H = \nabla^2F (U_*,V_*)$ is positive semidefinite and $\dim (\ker H) = k^2$. Fix $0 < \omega < 2$. Then for $X_0^{} = U_0^{} V_0^\top$ close enough (this may depend on~$\omega$) to $X_*^{} = U_*^{} V_*^\top$ Algorithm~\ref{alg:als_qr} is well-defined and the iterates $X_\ell^{} = U_\ell^{} V_\ell^\top$ converge to $X_*$ at an asymptotic linear rate
\[
\rho_\omega = \limsup_{\ell \to \infty} \| X_\ell - X_* \|^{1/\ell} < 1,
\]
where $\rho_\omega$ is the spectral radius of $T_\omega$ on $\mathcal W_\omega$.
\end{theorem}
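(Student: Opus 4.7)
The plan is to identify the spectrum of $\mathbf{S}_\omega'(X_*)$ restricted to the tangent space $T_{X_*}\mathcal M_k$ with that of $T_\omega$ restricted to $\mathcal W_\omega$, and then to invoke Proposition~\ref{prop: convergence criterion}. Since $\mathbf{S}_\omega$ maps a neighborhood of $X_*$ in $\mathcal M_k$ into $\mathcal M_k$, its derivative $\mathbf{S}_\omega'(X_*)$ already sends $T_{X_*}\mathcal M_k$ to itself, so the tangent projection in~\eqref{eq: convergence criterion} acts as the identity on the relevant subspace and it suffices to compute the spectral radius of this endomorphism of $T_{X_*}\mathcal M_k$.

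The key observation is the intertwining relation $\mathbf{S}_\omega \circ \tau = \tau \circ S_\omega$ in a neighborhood of $(U_*,V_*)$, which holds by the definition~\eqref{eq: bold S omega}. Differentiating at $(U_*,V_*)$ and using~\eqref{eq: derivative in parameter space} yields
\[
\mathbf{S}_\omega'(X_*)\, \tau'(U_*,V_*) = \tau'(U_*,V_*)\, T_\omega.
\]
Proposition~\ref{prop: tau as diffeomorphism} applies here: $H$ is positive semidefinite and $\dim\ker H = k^2$ by assumption, and the direct-sum splitting~\eqref{eq: splitting} holds by the argument in footnote~\ref{footnote 1}; the positive definiteness of $D$ needed there is a consequence of strong convexity of $f$ combined with the injectivity of $U\mapsto UV_*^\top$ and $V\mapsto U_*V^\top$. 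Consequently $\tau'(U_*,V_*)$ restricts to a linear isomorphism $\mathcal W_\omega \to T_{X_*}\mathcal M_k$, and since $\mathcal W_\omega$ is $T_\omega$-invariant, the identity above exhibits $\mathbf{S}_\omega'(X_*)|_{T_{X_*}\mathcal M_k}$ as similar to $T_\omega|_{\mathcal W_\omega}$. In particular the two operators share their eigenvalues, so the two meanings of $\rho_\omega$ coincide.

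It then remains to show $\rho(T_\omega|_{\mathcal W_\omega}) < 1$ for $0 < \omega < 2$. This is the classical two-block SOR contraction result for a positive semidefinite matrix $H = D + E + E^\top$ with positive definite block diagonal $D$, restricted to an invariant complement of $\ker H$, and can be imported from~\cite{Weissinger1953} or~\cite{Keller1965}. Once $\rho_\omega < 1$ is established, Proposition~\ref{prop: convergence criterion} delivers the local convergence of Algorithm~\ref{alg:als_qr} at asymptotic linear rate $\rho_\omega$.

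The only delicate point in assembling these pieces is the conjugacy argument: one must make sure the intertwining relation genuinely relates linear maps on matched invariant subspaces, which is exactly why the specific horizontal space $\mathcal W_\omega = N_\omega^{-1}(\ker H)^\perp$ is used rather than an arbitrary complement of $\ker H$. Beyond this bookkeeping the substantive analytic content—that a two-block SOR iteration with positive semidefinite data contracts on an invariant complement of its kernel for any $\omega \in (0,2)$—is borrowed from the classical literature and requires no new argument.
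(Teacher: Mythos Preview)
Your proposal is correct and follows essentially the same approach as the paper. The paper restricts $\tau$ to the affine slice $(U_*,V_*)+\mathcal W_\omega$, writes $\mathbf S_\omega = \tau \circ S_\omega \circ \tau^{-1}$, and differentiates to obtain the conjugacy $\mathbf S_\omega'(X_*)=\tau'(U_*,V_*)\circ T_\omega\circ[\tau'(U_*,V_*)]^{-1}$; this is equivalent to your intertwining identity $\mathbf S_\omega'(X_*)\,\tau'(U_*,V_*)=\tau'(U_*,V_*)\,T_\omega$ combined with the isomorphism from Proposition~\ref{prop: tau as diffeomorphism}, and the remaining ingredients (positive definiteness of $D$ from strong convexity, contraction of $T_\omega$ on $\mathcal W_\omega$ via the classical SOR results, and the appeal to Proposition~\ref{prop: convergence criterion}) match.
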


The convergence rate $\rho_\omega$ is determined in the next section. We stated the above result separately because its proof can be easily generalized to alternating optimization methods for low-rank tensor formats that admit a similar invariance under a group action. This will be outlined for the tensor train format in section~\ref{sec: low-rank tensor problems}.

\begin{proof}
Since $V_*$ has full column rank, the linear map $U \mapsto U V_*^\top$ is injective and hence the restricted map $U \mapsto F(U, V_*) = f(U V_*^T)$ is strongly convex. Therefore, $\nabla_{UU}F(U_*,V_*)$ is positive definite. Likewise, $\nabla_{VV}F(U_*,V_*)$ is positive definite, so that the block diagonal part $D$ of $H$ is positive definite. As a result, the decomposition~\eqref{eq: splitting} of the parameter space applies and $T_\omega$ is a contraction on its invariant subspace $\mathcal W_\omega$. In a neighborhood of $X_*$ in $\mathcal M_k$ the map $\mathbf S_\omega$ in~\eqref{eq: bold S omega} can be written as
\[
\mathbf S_\omega = \tau \circ S_\omega \circ \tau^{-1},
\]
where we have restricted $\tau$ to the affine subspace $(U_*,V_*) + \mathcal W_\omega$. Therefore, by chain rule,
\begin{equation}\label{eq: dSomega with tau}
\mathbf S_\omega'(X_*) = [\tau'(U_*,V_*)] \circ T_\omega \circ [\tau'(U_*,V_*)]^{-1}.
\end{equation}
 By Proposition~\ref{prop: tau as diffeomorphism}, the derivative $\tau'(U_*,V_*)$ is an isomorphism between $\mathcal W_\omega$ and $T_{X_*} \mathcal M_k$.  Due to~\eqref{eq: dSomega with tau}, this implies that the convergence criterion~\eqref{eq: convergence criterion} in Proposition~\ref{prop: convergence criterion} is satisfied. 
\end{proof}

\begin{remark}\label{rem: D positive definite}
The assumptions that $H$ is positive semidefinite and $\dim(\ker H) = k^2$ already imply by themselves that $D$ is positive definite. Indeed, as noted in the proof of Proposition~\ref{prop: tau as diffeomorphism}, $\dim(\ker H) = k^2$ means that $\ker H$ equals the tangent space to the $\theta_A$-orbit of $(U_*,V_*)$, which however does not contain elements of the form $(U,0)$ or $(0,V)$ (this can be seen from~\eqref{eq: group action}). This allows to define a nonlinear SOR process in a neighborhood of such a critical point based on the implicit definitions~\eqref{eq: implicit definition S1 S2} of $\hat S_1$ and $\hat S_2$ even when $f$ is not strongly convex; cf.~\cite[Thm.~10.3.5]{Ortega1970}.
\end{remark}

To get a better intuition for the assumptions in the theorem, it is useful to write the Hessian as a bilinear form
\[
    \nabla^2 F(U_*,V_*)[h,h] = \langle \tau'(U_*,V_*)[h] ,\nabla^2 f(X_*) \cdot \tau'(U_*,V_*)[h] \rangle + \langle \nabla f(X_*), \tau''(U_*,V_*)[h,h] \rangle,
\]
where $h = (\delta U, \delta V)$. If $f$ is strictly convex, then the first term is nonnegative and equal to zero if and only if $\tau'(U_*,V_*)[h] = 0$, that is, if $h$ is in the tangent space to the $\theta_A$-orbit at $(U_*,V_*)$. Thus, an important situation in which the assumptions of the theorem are satisfied is when $\nabla f(X_*) = 0$, that is, when $X_*^{} = U_*^{} V_*^\top$ is a global minimum of $f$. In Section~\ref{sec: matrix completion} we conduct some numerical experiments for a matrix completion problem~\eqref{eq: matrix completion} admitting such a global minimum $X_*^{} = U_*^{}V_*^\top$ with $\nabla f(X_*) = 0$. In this application, however, $f$ is only convex, but not strictly convex. Then in order to satisfy the assumptions of the theorem at $X_*$ one would need that the tangent space $T_{X_*} \mathcal M_k$ (the image of $\tau'(U_*,V_*)$) does not intersect the null space of $\nabla^2 f(X_*) = P_\Omega$, but we will not investigate this condition in detail.

\subsection{Asymptotically optimal relaxation}

It is well-known that under certain assumptions the relaxation parameter $\omega$ in the linear SOR method can be optimized using a theorem of Young; see, e.g.,~\cite[Sec.~6.2]{Young71} or~\cite[Sec.~4.6.2]{Hackbusch2016}. This theory is usually presented for positive definite systems. However, for $2 \times 2$ block systems it is possible to adjust the arguments to the positive semidefinite case.

\begin{lemma}\label{lem: SOR method}
Let $H = D + E + E^\top \in \mathbb{R}^{p \times p}$ be a positive semidefinite $2 \times 2$ block matrix with positive definite block diagonal $D$ and such that $\frac{1}{\omega} D + E$ is invertible for any $0 < \omega < 2$. Assume $q = \dim(\ker H) < p/2$. 
Let $\sigma(I - D^{-1} H)$ denote the spectrum of $I - D^{-1}H$, then
\[
\beta \coloneqq \max \{ \abs{\mu} \colon \mu \in \sigma(I - D^{-1}H) \setminus \{\pm 1 \}  \}  < 1.
\]
The matrix $T_\omega = I - N_\omega^{-1} H$, where $N_\omega = \frac{1}{\omega} D + E$, induces a decomposition~\eqref{eq: splitting} into two invariant subspaces and the spectral radius $\rho_\omega$ of $T_\omega$ on $\mathcal W_\omega$ equals
\[
\rho_\omega = \begin{cases}
1 - \omega  + \frac{1}{2} \omega^2 \beta^2 + \omega \beta \sqrt{1 - \omega + \frac14 \omega^2 \beta^2}
 &\quad \text{if $0 < \omega \le \omega_{\text{\upshape opt}}$}, \\ \omega - 1 &\quad \text{if $\omega_{\text{\upshape opt}} \le \omega < 2$}, \end{cases}
\]
where
\begin{equation}\label{eq:omega_opt}
\omega_{\text{\upshape opt}} = \frac{2}{1 + \sqrt{1- \beta^2}} > 1.
\end{equation}
The value of $\rho_\omega$ is minimal for $\omega = \omega_{\text{\upshape opt}}$. It holds that $\beta^2 = \rho_1$ is the spectral radius for the standard AO method with $\omega = 1$ (on its invariant subspace $\mathcal W_1$). 
\end{lemma}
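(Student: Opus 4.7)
The plan is a variant of the classical Young SOR analysis, carefully factoring out the kernel direction.

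First, I would study the Jacobi-like matrix $J \coloneqq I - D^{-1}H = -D^{-1}(E + E^\top)$. Since $E$ has zero diagonal blocks in the $2\times 2$ partition, conjugation by $\operatorname{diag}(I_1, -I_2)$ sends $J$ to $-J$, so $\sigma(J)$ is symmetric about the origin. The similarity $J \sim I - D^{-1/2}HD^{-1/2}$ by $D^{1/2}$ shows that $\sigma(J)$ is real and, by positive semidefiniteness of $H$, contained in $(-\infty, 1]$; combined with the $\pm$-symmetry we obtain $\sigma(J)\subseteq[-1,1]$. The $+1$-eigenspace of $J$ is exactly $\ker H$ (dimension $q$), and by $\pm$-symmetry the $-1$-eigenspace has the same dimension, so with $q < p/2$ the remaining spectrum lies strictly inside $(-1,1)$ and $\beta < 1$.

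Next, I would derive Young's identity. Writing $T_\omega x = \lambda x$ in block form $x=(x_1,x_2)$ and eliminating the off-diagonal blocks gives, by the standard two-block calculation, the relation $(\lambda + \omega - 1)^2 = \omega^2 \mu^2 \lambda$ for some $\mu \in \sigma(J)$. Each pair $\pm\mu$ spans a $2$-dimensional $T_\omega$-invariant subspace on which $T_\omega$ has the two roots of this quadratic. When $\mu^2 = 1$ the roots are $\{1, (1-\omega)^2\}$; since $T_\omega v = v$ iff $v \in \ker H$, the $\lambda = 1$ part fills $\ker H$ exactly, and the splitting~\eqref{eq: splitting} forces the $(1-\omega)^2$ part into $\mathcal W_\omega$. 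When $\mu^2 < 1$, both roots differ from $1$ and therefore lie in $\mathcal W_\omega$. A dimension count confirms these exhaust the $p-q$ eigenvalues of $T_\omega$ on $\mathcal W_\omega$.

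Third, I would optimize over $\omega$. Solving the Young quadratic gives discriminant $\omega^2\mu^2(\omega^2\mu^2 - 4\omega + 4)$, which is nonnegative exactly for $\omega \le 2/(1+\sqrt{1-\mu^2})$; above this threshold the two roots are complex conjugate with modulus $\abs{\omega-1}$ by Vieta, and below it the larger real root is $\lambda_+(\mu,\omega) = \tfrac{1}{2}(\omega^2\mu^2 + 2(1-\omega) + \omega\abs{\mu}\sqrt{\omega^2\mu^2 - 4\omega + 4})$, monotone increasing in $\abs{\mu}$. A short verification that the extra eigenvalues $(1-\omega)^2$ and $\abs{1-\omega}$ (the latter appearing only if $0 \in \sigma(J)$) are dominated by these then yields $\rho_\omega = \lambda_+(\beta,\omega)$ for $0 < \omega \le \omega_\text{opt} = 2/(1+\sqrt{1-\beta^2})$ and $\rho_\omega = \omega - 1$ for $\omega_\text{opt} \le \omega < 2$. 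The two branches meet at $\omega_\text{opt}$ and each is monotone on its interval (decreasing, then increasing), so $\omega_\text{opt}$ is the unique minimizer. Setting $\omega = 1$ in the first branch collapses the expression to $\beta^2$, which proves the final identity.

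The main obstacle I anticipate is the bookkeeping in the second step: justifying rigorously that the contributions from the $\mu^2 = 1$ Jacobi pair split cleanly into $q$ copies of $\lambda = 1$ filling $\ker H$ and $q$ copies of $\lambda = (1-\omega)^2$ lying inside $\mathcal W_\omega$. I would handle this by combining the explicit eigenvector construction $(v_1, c_\pm v_2)$ from the Young derivation with the characterization $\mathcal W_\omega = N_\omega^{-1}(\ker H)^\perp$ from~\eqref{eq: invariant subspace} and the orthogonality computation in footnote~\ref{footnote 1}.
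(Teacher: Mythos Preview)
Your proposal is correct and follows essentially the same route as the paper: both adapt Young's classical SOR analysis to the positive semidefinite $2\times 2$ block setting by exploiting the two-cyclic structure of $J = I - D^{-1}H$, deriving the eigenvalue relation $(\lambda+\omega-1)^2 = \omega^2\mu^2\lambda$, isolating the $\mu = \pm 1$ contributions to separate $\ker H$ from $\mathcal W_\omega$, and then doing the standard discriminant case analysis. The only noteworthy difference is how $\beta<1$ is obtained: you use the similarity $J \sim I - D^{-1/2}HD^{-1/2}$ together with the $\pm$-symmetry, whereas the paper rewrites the eigenvalue equation as $Hx = (1-\mu)Dx$ and $(2D-H)x = (1+\mu)Dx$ and invokes the fact that $H$ and $2D-H$ are simultaneously positive semidefinite; both arguments are short and equivalent in strength.
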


\begin{proof}
The decomposition~\eqref{eq: splitting} into invariant subspaces has already been verified (see Footnote~\ref{footnote 1}). We follow the arguments in the proof of~\cite[Thm.~4.27]{Hackbusch2016}. There it is shown that the eigenvalues $\mu$ of $I - D^{-1}H$ and $\lambda$ of $T_\omega$ are related via
\begin{equation}\label{eq: formula for lambda}
\lambda  = 1 - \omega + \frac{1}{2} \omega^2 \mu^2 \pm \omega \mu \sqrt{1 - \omega + \frac14 \omega^2 \mu^2}.
\end{equation}
Indeed, note that under the given assumptions $I - D^{-1}H = -D^{-1}(E + E^\top)$ is a two-cyclic matrix and has only real eigenvalues, of which the nonzero ones come in pairs $\pm \mu$. By~\eqref{eq: formula for lambda}, both $\mu$ and $-\mu$ create a same pair of eigenvalues $\lambda$. Eigenvalues $\mu = 1$ of $I - D^{-1}H$ must belong to eigenvectors in $\ker H$. Therefore, $\mu = 1$ and $\mu = -1$ both have multiplicity $q$. They yield eigenvalues $\lambda = 1$ and $\lambda = (1 - \omega)^2$ of $T_\omega$. Since eigenvectors of $T_\omega$ with $\lambda = 1$ must belong to $\ker H$, we conclude that the restriction of $T_\omega$ to the invariant subspace $\mathcal W_\omega$ has an eigenvalue $(1 - \omega)^2$ and its other eigenvalues are generated from formula~\eqref{eq: formula for lambda} with $\abs{\mu} \neq 1$. Since $2q < p$ such $\mu$ must exist. Rewriting the eigenvalue equation $(I - D^{-1}H)x = \mu x$ in the two ways
\[
Hx = (1 - \mu)Dx, \quad (2D - H)x = (1+\mu) D x,
\]
and using a special property of $2 \times 2$ block matrices that $H$ and $2D - H$ have the same eigenvalues, we obtain $\abs{\mu} \le 1$ since  both $H$ and $2D-H$ are positive semidefinite and $D$ is positive definite. This shows $\beta < 1$.

Consider eigenvalues $\mu$ of $I - D^{-1} H$ with $\abs{\mu} < 1$. If $\omega \ge \omega_{\text{\upshape opt}}$, then for such $\mu$ the expression under the square root in formula~\eqref{eq: formula for lambda} is always negative. Hence they generate pairs of conjugate complex eigenvalues $\lambda$, but one verifies that they all have the same modulus $\abs{\lambda} = \abs{1 - \omega}$, independent from $\abs{\mu}$. Clearly $\abs{1 - \omega} > (1 - \omega)^2$ so that the asserted formula for $\rho_\omega$ is proven for $\omega \ge \omega_{\text{\upshape opt}}$. When $0< \omega < \omega_{\text{\upshape opt}}$ the expression under the square root in~\eqref{eq: formula for lambda} may be negative or not. If it is negative, we have already seen that $\abs{\lambda} = \abs{1-\omega}$ is generated. If it is nonnegative, which in particular is the case for $\mu = \pm \beta$, the corresponding $\lambda$ with the larger absolute value is
\[
\lambda = 1 - \omega + \frac{1}{2} \omega^2 \mu^2 + \omega \abs{\mu} \sqrt{1 - \omega + \frac14 \omega^2 \mu^2}
\]
(since the sum before $\pm$ in~\eqref{eq: formula for lambda} then is nonnegative, too). This expression is maximized for $\mu = \pm\beta$ and also is then larger than $\abs{1 - \omega}$ on the interval $0<\omega < \omega_{\text{\upshape opt}}$. The statements of the lemma follow.
\end{proof}

\begin{remark}
In the setting of the lemma one always has $q = \dim(\ker H) \le p/2$, but (if $p$ is even) equality $q = 2p$ could in principle hold. It is then interesting to note that in this case $\rho_\omega = (1 - \omega)^2$, which is minimized for $\omega = 1$, yielding a superlinear convergence rate. However, this case is not relevant in the context of our work, where $p = km + kn$ with a rank $k < \min(m,n)$. In the following theorem we assume $q = k^2$ so that $q < p/2$ is satisfied. 
\end{remark}

Applying Lemma~\ref{lem: SOR method} in the context of Theorem~\ref{thm: local convergence theorem} immediately provides our main result on the asymptotically optimal choice of the shift $\omega$ for Algorithm~\ref{alg:als_qr}.

\begin{theorem}\label{thm: optimal shift for AO}
Let $f$ be strongly convex and $(U_*,V_*)$ be a critical point of $F$ in~\eqref{eq: factorized optimization problem} with $U_*, V_*$ having full column rank $k < \min(m,n)$. Assume that the Hessian $H = \nabla^2F (U_*,V_*)$ is positive semidefinite and $\dim (\ker H) = k^2$. Let $\rho_1 < 1$ be the asymptotic linear convergence rate of the standard AO method with $\omega = 1$. Fix $0 < \omega < 2$. Then for $X_0^{} = U_0^{} V_0^\top$ close enough (this may depend on $\omega$) to $X_*^{} = U_*^{} V_*^\top$ Algorithm~\ref{alg:als_qr} is well-defined and the iterates $X_\ell^{} = U_\ell^{} V_\ell^\top$ converge to $X_*$ at an asymptotic linear rate $\rho_\omega < 1$ given in Lemma~\ref{lem: SOR method} with $\beta^2 = \rho_1$. The optimal asymptotic rate is achieved for 
\[\omega_{\text{\upshape opt}} = \frac{2}{1 + \sqrt{1 - \beta^2}}.\]
\end{theorem}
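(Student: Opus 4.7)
The plan is to combine Theorem~\ref{thm: local convergence theorem} with Lemma~\ref{lem: SOR method} essentially verbatim, with only a few algebraic checks to confirm that the hypotheses of the lemma are met in the setting of the theorem.

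First I would invoke Theorem~\ref{thm: local convergence theorem} to obtain local convergence of Algorithm~\ref{alg:als_qr} at asymptotic linear rate equal to the spectral radius of $T_\omega$ on its invariant subspace $\mathcal W_\omega$. All hypotheses of that theorem are present here, so this step is immediate. It then remains to identify this spectral radius with the closed-form expression in Lemma~\ref{lem: SOR method} and to read off the optimal $\omega$.

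Next I would check the hypotheses of Lemma~\ref{lem: SOR method}. That $D$ is positive definite was already established in the proof of Theorem~\ref{thm: local convergence theorem} via strong convexity of $f$ and the full-column-rank assumption on $U_*, V_*$. The matrix $N_\omega = \frac{1}{\omega} D + E$ is block lower triangular with the positive-definite diagonal blocks of $D$ on the diagonal, hence invertible for every $0 < \omega < 2$. The only genuine computation is the dimension count $q < p/2$: here $p = (m+n)k$ and $q = k^2$, so the condition reduces to $2k^2 < (m+n)k$, i.e.\ $2k < m+n$, which follows from $k < \min(m,n)$.

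With the lemma applicable, its formula gives $\rho_\omega$ for every $0 < \omega < 2$ in terms of $\beta$, along with the minimizer $\omega_{\text{opt}} = 2/(1+\sqrt{1-\beta^2})$. To express $\beta$ through an observable quantity, I would specialize the formula to $\omega = 1$, where it collapses to $\rho_1 = \tfrac{1}{2}\beta^2 + \beta \cdot \tfrac{1}{2}\abs{\beta} = \beta^2$; this is already noted in the last sentence of Lemma~\ref{lem: SOR method}. Substituting $\beta^2 = \rho_1$ into the formula for $\omega_{\text{opt}}$ yields the stated expression, and the theorem follows. I do not foresee a serious obstacle; the only nontrivial ingredient is the semidefinite version of Young's SOR theorem, which has already been carried out in Lemma~\ref{lem: SOR method}.
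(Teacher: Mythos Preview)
Your proposal is correct and follows exactly the paper's own approach, which is simply to apply Lemma~\ref{lem: SOR method} in the setting of Theorem~\ref{thm: local convergence theorem}; you have merely made the routine hypothesis checks (positive definiteness of $D$, invertibility of $N_\omega$, the inequality $q=k^2<p/2=(m+n)k/2$, and $\rho_1=\beta^2$) explicit. One tiny wording fix: the diagonal blocks of $N_\omega$ are $\frac{1}{\omega}$ times those of $D$, not the blocks of $D$ themselves, but of course they remain positive definite.
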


In practice, the simplest approach for approximating $\omega_{\text{\upshape opt}}$ adaptively is by running the standard method with $\omega = 1$ and estimating $\beta^2 \approx \rho_1$ based on its numerically observed convergence rate. The efficiency of this approach will be illustrated in section~\ref{sec: numerical experiments}.

\section{Low-rank tensor problems}\label{sec: low-rank tensor problems}

Clearly, the nonlinear SOR method can be applied to functions with more than two block variables. In low-rank tensor optimization one frequently considers problems of the form
\begin{equation}\label{eq: multilinear optimization} 
\min F(U^1,\dots,U^D) = f(\tau(U^1,\dots,U^D)),
\end{equation}
where now $f$ is a smooth function on a tensor space $\R^{n_1 \times \dots \times n_d}$, and $\tau \colon \mathcal V_1 \times \dots \times \mathcal V_D \to \R^{n_1 \times \dots \times n_d}$ is a multilinear map parametrizing a low-rank tensor format. Such a problem is amenable to alternating optimization since the update for a single block variable $U^\mu$ is just an optimization problem for the function $f$, but on a linear subspace of $\R^{n_1 \times \dots \times n_d}$.

As an important example we mention optimization in the tensor train (TT) format~\cite{Oseledets2011}. Here $D = d$ and
\[
\tau \colon \mathcal V \coloneqq \R^{n_1 \times k_1} \times \R^{k_1 \times n_2 \times k_2} \times \dots \times \R^{k_{d-2} \times n_{d-1} \times k_{d-1}} \times \R^{k_{d-1} \times n_d} \to \R^{n_1 \times \dots \times n_d}
\]
is defined via
\begin{equation}\label{eq: TT map}
X = \tau(U^1, \dots, U^d) \quad \Leftrightarrow \quad X(i_1,\dots,i_d) = U^1(i_1,\colon) U^2(\colon,i_2,\colon) \cdots  U^d(\colon,i_d),
\end{equation}
which are matrix products of corresponding slices in the so called TT cores $U^\mu \in \R^{k_{\mu-1} \times n_\mu \times k_\mu}$ (one fixes $k_0 = k_1 = 1$). The minimial possible values $(k_1,\dots,k_{d-1})$, which determine the sizes of the TT cores, such that such a decomposition is possible are called the TT-ranks of tensor $X$. Alternating optimization methods form the basis for the majority of computational methods in the tensor train format~\cite{Holtz2012}.

An AO method with relaxation for~\eqref{eq: multilinear optimization} takes the form
\begin{equation}\label{eq: AO for multilinear}
\begin{aligned}
    U^1_{\ell + 1} &= (1-\omega) U^1_\ell + \omega \hat S_1(U^2_\ell,\dots,U^d_\ell), \\
    &\vdots\\
    U^{\mu}_{\ell+1} &= (1-\omega) U^\mu_\ell + \omega \hat S_\mu(U^1_{\ell+1},\dots,U^{\mu-1}_{\ell+1},U^{\mu+1}_{\ell},\dots,U^d_\ell), \\
    &\vdots \\
    U^d_{\ell + 1} &= (1-\omega) U^d_\ell + \omega \hat S_d(U^1_{\ell+1},\dots,U^{d-1}_{\ell+1}),
\end{aligned}
\end{equation}
where the $\hat S_\mu$ return minimizers (or critical points) of the restricted functions $U^\mu \mapsto F(\ldots,U^\mu,\ldots)$ with the other block variables being fixed. In this form the method has been proposed for the tensor train format in~\cite{Grasedyck2015}. Under suitable assumptions such an iteration defines a smooth map $S_\omega$ from an open subset of $\mathcal V_1 \times \dots \times \mathcal V_D$ to $\R^{n_1 \times \dots \times n_d}$ for which a similar fixed point analysis as in the matrix case can be conducted. In the following we sketch this for the tensor train format, but the ideas can be applied to general tree tensor network formats such as the Tucker or hierarchical Tucker format. We will make use of several well-known properties of the tensor train format, in particular the quotient manifold structure of tensors of fixed TT-rank and the orbital invariance of AO methods. Most of the related details can be found in~\cite{Rohwedder2013} and~\cite{Uschmajew2013}. 

For the tensor train format~\eqref{eq: TT map} we assume that $\mathbf k = (k_1,\dots,k_{d-1})$ is chosen such that tensors of TT-rank $\mathbf k$ exist. Then in fact on a dense and open subset $\mathcal V'$ of $\mathcal V$ the map $\tau$ maps to tensors of fixed TT-rank $\mathbf k$. For convenience we will use the notation $\mathbf U = (U^1,\dots,U^d)$ for the elements in $\mathcal V$. As in the matrix case, $\tau$ in~\eqref{eq: TT map} is invariant under a group action, namely,
\begin{equation*}\label{eq: group action TT}
\mathcal G = \mathrm{GL}(k_1) \times \dots \times \mathrm{GL}(k_{d-1}) \ni \mathbf A = (A_1,\dots,A_{d-1}) \mapsto \theta_{\mathbf A} \cdot \mathbf U,
\end{equation*}
which inserts the product $A_{\mu}^{} A_{\mu}^{-1}$ between the matrices $U^\mu(:,i_\mu,:)$ and $U^{\mu+1}(:,i_{\mu+1},:)$ in~\eqref{eq: TT map}, that is, the slices of the TT cores are transformed according to
\begin{equation}\label{eq: group action slicewise}
U^\mu(:,i_\mu,:) \to A_{\mu-1}^{-1} U^\mu(:,i_\mu,:) A_\mu^{}
\end{equation}
(here $A_0 = A_d = 1$). The corresponding restriction of $\tau$ to the quotient manifold $\mathcal V' / \mathcal G$ is a diffeomorphism onto the set $\mathcal M_{\mathbf k}$ of tensors of fixed TT-rank $\mathbf k$, which is an embedded submanifold of $\R^{n_1 \times \dots \times n_d}$ of dimension $\dim(\mathcal M_{\mathbf k}) = \dim(\mathcal V) - \dim(\mathcal G)$. Notably, let $\mathbf U \in \mathcal V'$, then $\tau'(\mathbf U) = 0$ on the tangent space of the orbit $\theta_{\mathbf A} \cdot \mathbf U$ at $\mathbf U$. On any complementary subspace $\mathcal W$ to that tangent space, $\tau'(\mathbf U)$ is a bijection from $\mathcal W$ to the tangent space of $\mathcal M_{\mathbf k}$ at $\tau(\mathbf U)$.

Assume again that $f$ is smooth and strongly convex. Then any critical point $\mathbf U_*$ of the function $F = f \circ \tau$ that lies in $\mathcal V'$ is a fixed point of the iteration~\eqref{eq: AO for multilinear} since the restricted linear maps $U^\mu \mapsto \tau(U^1_*,\dots,U^\mu,\dots,U_*^d)$ are injective so that the corresponding restriction $U^\mu \mapsto F(U^1_*,\dots,U^\mu,\dots,U_*^d)$ is strongly convex. Moreover, the whole process is well-defined in some neighborhood of (the orbit of) $\mathbf U_*$ where it can be written as
\[
\mathbf U_{\ell+1} = S_\omega (\mathbf U_\ell)
\]
with a smooth map $S_\omega$. A key observation to make is that the maps $\hat S_1,\dots, \hat S_d$ in~\eqref{eq: AO for multilinear} that realize the updates of single TT cores exhibit an analogous compatibility with the group action as in~\eqref{eq: S1 S2 invariance} for the matrix case, namely
\[
\hat S_\mu(\theta_{\mathbf{A}} \cdot \mathbf U) = A_{\mu-1}^{-1} \hat S_\mu(\mathbf U) A_\mu^{},
\]
where the matrix product is understood slice-wise as in~\eqref{eq: group action slicewise} (and we slightly abused notation since $\hat S_\mu$ does not depend on $U^\mu$). It entails a corresponding invariance
\begin{equation}\label{eq: group invariance TT}
S_\omega (\theta_{\mathbf A} \cdot \mathbf U) = \theta_{\mathbf A} \cdot S_\omega (\mathbf U)
\end{equation}
of a full update loop, in analogy to~\eqref{eq: invariance for Somega}. This allows us to regard~\eqref{eq: AO for multilinear} as a well-defined iteration
\[
X_{\ell+1} = \mathbf S_\omega(X_\ell)
\]
on the manifold $\mathcal M_{\mathbf k}$, at least locally in a neighborhood of $\tau(\mathbf U_*)$. From a practical viewpoint, the invariance~\eqref{eq: group invariance TT} admits to change the tensor train representation in every substep of~\eqref{eq: AO for multilinear} in order to make the restricted linear maps $U^\mu \mapsto \tau(\dots,U^\mu,\dots,)$ orthogonal and improve numerical stability. We refer to~\cite{Holtz2012,Rohwedder2013} for details on orthogonalization of substeps.

Based on these similarities to the matrix case, one can proceed in almost the same way as in section~\ref{sec: low-rank AO}. Let $\mathbf U_* \in \mathcal V'$ be a critical point of $F$, that is, $\nabla F(\mathbf U_*) = 0$. Due to the orbital invariance of $F$, the Hessian $H = \nabla^2 F(\mathbf U_*)$ has a kernel of dimension at least $\dim(\mathcal G)$ since it contains the tangent space to the orbit at $\mathbf U_*$. However, in the block decomposition
\begin{equation}\label{eq: decomposition}
H = D + E + E^\top
\end{equation}
into a block diagonal part $D$ (corresponding to the block variables $U^1,\dots,U^d$) and lower block triangular part $E$, the block matrix $D$ is positive definite since $f$ is strongly convex.\footnote{As in Remark~\ref{rem: D positive definite}, this also follows from the assumptions that $H$ is positive semidefinite and $\dim(\ker H) = \dim (\mathcal G)$, since the tangent space to the orbit at $\mathbf U_* \in \mathcal V'$ does not contain elements of the form $(0,\dots,0,U^\mu,0,\dots,0)$.} The derivative of $S_\omega$ then again takes the form of an SOR error iteration matrix
\[
T_\omega = I - N_\omega^{-1} H, \quad N_\omega = \frac{1}{\omega}D + E,
\]
similar to~\eqref{eq: derivative in parameter space}; see, e.g.,~\cite[Thm.~10.3.4 \& 10.3.5]{Ortega1970} for the derivation. For $0 < \omega < 2$ a decomposition $\mathcal V = \ker H \oplus \mathcal W_\omega$ as in~\eqref{eq: splitting} applies and $T_\omega$ is a contraction on the invariant subspace $\mathcal W_\omega$ if $H$ is positive semidefinite. Using the same proof as for Theorem~\ref{thm: local convergence theorem} we obtain the analogous local convergence result for tensor train optimization. Recall that we assume that~$\mathbf k$ is properly chosen so that $\tau$ maps the open and dense subset $\mathcal V'$ to the manifold $\mathcal M_{\mathbf k}$.

\begin{theorem}\label{thm: local convergence TT}
Let $\mathbf U_* \in \mathcal V'$ be a critical point of function $F$ in~\eqref{eq: multilinear optimization} where $f$ is strongly convex. Assume that the Hessian $H = \nabla^2F (\mathbf U_*)$ is positive semidefinite and $\dim (\ker H) = \dim(\mathcal G) = k_1^2 + \dots + k_{d-1}^2$. Fix $0 < \omega < 2$. Then for $X_0 = \tau(\mathbf U_0)$ close enough (this may depend on $\omega$) to $X_* = \tau(\mathbf U_*)$ the iteration~\eqref{eq: AO for multilinear} is well-defined and the iterates $X_\ell = \tau(\mathbf U_\ell)$ converge to $X_*$ at an asymptotic linear rate
\[
\rho_\omega = \limsup_{\ell \to \infty} \| X_\ell - X_* \|^{1/\ell} < 1,
\]
where $\rho_\omega$ is the spectral radius of $T_\omega$ on $\mathcal W_\omega$.
\end{theorem}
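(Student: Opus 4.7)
The plan is to follow the proof of Theorem~\ref{thm: local convergence theorem} step by step, replacing the two-block structure by the $d$-block structure of the TT decomposition and the $\mathrm{GL}(k)$ quotient by the $\mathcal G$-quotient. First I would check that the block diagonal part $D$ of the Hessian~\eqref{eq: decomposition} is positive definite. Since $\mathbf U_* \in \mathcal V'$, each restricted linear map $U^\mu \mapsto \tau(U^1_*,\dots,U^\mu,\dots,U^d_*)$ is injective, so strong convexity of $f$ implies strong convexity of every partial function $U^\mu \mapsto F(U^1_*,\dots,U^\mu,\dots,U^d_*)$, and hence every diagonal block $\nabla_{U^\mu U^\mu} F(\mathbf U_*)$ is positive definite. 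As noted in the footnote following~\eqref{eq: decomposition}, this conclusion could alternatively be drawn from the dimension assumption on $\ker H$ alone.

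Second, the derivative of $S_\omega$ at $\mathbf U_*$ equals the $d$-block SOR error iteration matrix $T_\omega = I - N_\omega^{-1} H$ with $N_\omega = \tfrac{1}{\omega} D + E$, which is the content of~\cite[Thm.~10.3.4 \& 10.3.5]{Ortega1970} and requires no separate calculation here. Combining $D$ positive definite, $H$ positive semidefinite, and $0 < \omega < 2$, the classical semidefinite SOR theory of~\cite{Weissinger1953, Keller1965} yields the decomposition $\mathcal V = \ker H \oplus \mathcal W_\omega$ with $\mathcal W_\omega = N_\omega^{-1}(\ker H)^\perp$ invariant under $T_\omega$, and the restriction $T_\omega|_{\mathcal W_\omega}$ a contraction with spectral radius $\rho_\omega < 1$ (the verification that $\mathcal W_\omega$ is complementary to $\ker H$ is the same one given in Footnote~\ref{footnote 1}, as it only uses $D$ positive definite and $0<\omega<2$).

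Third, the transfer to $\mathcal M_{\mathbf k}$ is the direct analog of Proposition~\ref{prop: tau as diffeomorphism}. The assumption $\dim(\ker H) = \dim(\mathcal G)$ forces $\ker H$ to coincide with the tangent space at $\mathbf U_*$ to the $\theta_{\mathbf A}$-orbit, so $\mathcal W_\omega$ is a horizontal complement. Since $\tau$ is a submersion on $\mathcal V'$ with values in the embedded submanifold $\mathcal M_{\mathbf k}$ and is constant along orbits, $\tau'(\mathbf U_*)$ restricts to an isomorphism $\mathcal W_\omega \to T_{X_*} \mathcal M_{\mathbf k}$, and the inverse function theorem makes $\tau$ a local diffeomorphism from a neighborhood of $\mathbf U_*$ in $\mathbf U_* + \mathcal W_\omega$ onto a neighborhood of $X_*$ in $\mathcal M_{\mathbf k}$. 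On this neighborhood $\mathbf S_\omega = \tau \circ S_\omega \circ \tau^{-1}$, so the chain rule gives
\[
\mathbf S_\omega'(X_*) = \tau'(\mathbf U_*) \circ T_\omega|_{\mathcal W_\omega} \circ [\tau'(\mathbf U_*)]^{-1},
\]
and $\mathbf S_\omega'(X_*)$ has spectral radius $\rho_\omega < 1$ on $T_{X_*}\mathcal M_{\mathbf k}$. The asserted local linear convergence then follows from the standard contraction criterion used in Proposition~\ref{prop: convergence criterion}. The only genuinely non-mechanical point in this plan is confirming the multi-block SOR linearization formula in a setting where each $\hat S_\mu$ is only implicitly defined; but the version of the implicit function argument given in~\cite[Thm.~10.3.4 \& 10.3.5]{Ortega1970} covers this, so no additional analytic work is needed beyond the matrix-case argument.
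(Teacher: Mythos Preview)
Your proposal is correct and follows essentially the same approach as the paper, which explicitly states that Theorem~\ref{thm: local convergence TT} is obtained ``using the same proof as for Theorem~\ref{thm: local convergence theorem}.'' Your plan reproduces that argument faithfully: positive definiteness of $D$, the SOR linearization $S_\omega'(\mathbf U_*) = T_\omega$ via \cite[Thm.~10.3.4 \& 10.3.5]{Ortega1970}, the invariant decomposition $\mathcal V = \ker H \oplus \mathcal W_\omega$ with $T_\omega$ contractive on $\mathcal W_\omega$, and the transfer to $\mathcal M_{\mathbf k}$ via the horizontal-space analog of Proposition~\ref{prop: tau as diffeomorphism} and the chain rule.
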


While so far everything looks conceptually almost identical to the matrix case, a major difference arises when proceeding to determine the optimal shift parameter $\omega$. It is not clear whether Lemma~\ref{lem: SOR method} can be generalized. This is in fact already an issue with the linear SOR method with more than two blocks for positive definite systems, since certain conditions on the decomposition~\eqref{eq: decomposition} of $H$ are required in order to derive a formula like~\eqref{eq: formula for lambda} for the eigenvalues of $T_\omega$; cf.~\cite[Sec.~4.6]{Hackbusch2016}. In the matrix case, the fact that $E + E^\top$ is two-cyclic makes this possible but for more than two block variables assuming such conditions on $E$ does not appear very reasonable, especially when taking into account that the critical point $(U_*,V_*)$ and hence its Hessian are not given a priori. Moreover, even if the formula~\eqref{eq: formula for lambda} would apply, one would need that the eigenvalues of the Jacobi error iteration matrix $I - D^{-1}H$ have absolute value at most one, but for a block decomposition~\eqref{eq: decomposition} with more than two blocks this does not follow from the positive definiteness of $D$ alone. Thus, for the tensor train format the estimation of an optimal parameter $\omega_{\text{\upshape opt}}$ from formula~\eqref{eq:omega_opt} remains a heuristic.

\section{Numerical experiments}\label{sec: numerical experiments}

In the last section we present some numerical experiments to illustrate the benefit of overrelaxation in low-rank optimization.

\subsection{Matrix completion problem}\label{sec: matrix completion}

\begin{figure}
\centering
\begin{subfigure}{.5\textwidth}
  \includegraphics[width=\linewidth]{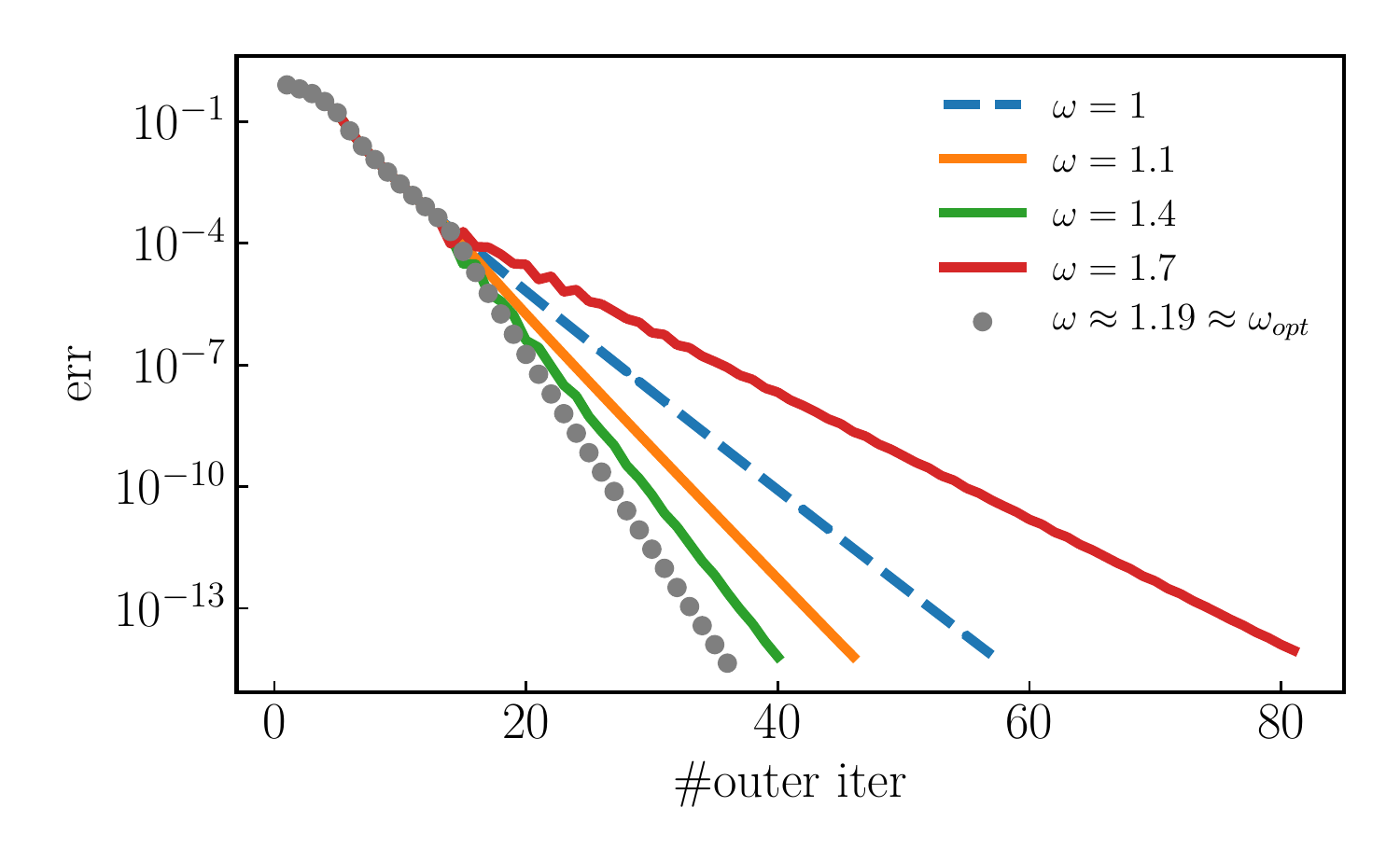}
\end{subfigure}%
\begin{subfigure}{.5\textwidth}
  \includegraphics[width=\linewidth]{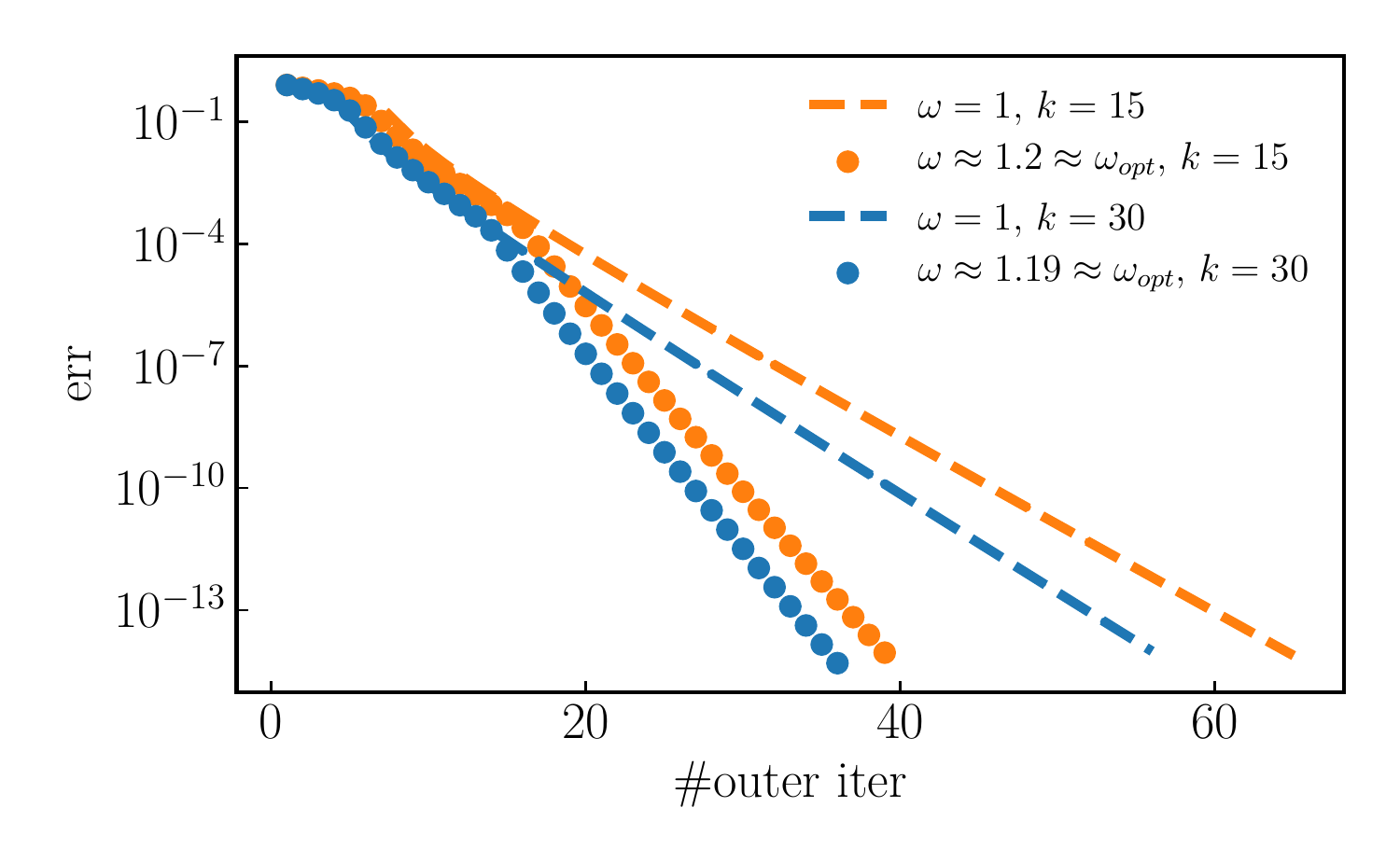}
\end{subfigure}
\caption{Relative residuals~\eqref{eq:rel_err} of Algorithm~\ref{alg:als_qr} for the completion problem~\eqref{eq: matrix completion} with respect to the number of outer iterations using various shift parameters $\omega$ for rank $k=30$ (left) and for rank values $k=15, 30$ (right). The parameter $\omega=1$ corresponds to the standard ALS method, while parameters $\omega\in(1, 2)$ represent the version of the iteration with overrelaxation. The $\omega \approx \omega_{\mathrm{opt}}$ case corresponds to the choice~\eqref{eq:omega_opt} with $\beta^2$ estimated using~\eqref{eq:param}.
}
\label{fig:completion_random}
\end{figure}

First, we apply the proposed AO overrelaxation scheme in Algorithm~\ref{alg:als_qr} to the following nonconvex formulation of a low-rank matrix completion problem:
\begin{equation}\label{eq: matrix completion}
  \min_{U\in\mathbb{R}^{m\times k}, V\in\mathbb{R}^{n\times k}} F(U,V) = \frac{1}{2}\left\|P_{\Omega} (A - UV^\top) \right\|_F^2.
  \end{equation}
Here $\Omega$ is a given set of index pairs, and the linear operator $P_{\Omega}\colon \mathbb{R}^{m\times n} \to \mathbb{R}^{m\times n}$ is defined as 
\[
    \left(P_{\Omega}(X) \right)_{ij} = 
    \begin{cases}
        x_{ij}, & (i,j)\in\Omega, \\
        0, & \text{ otherwise}.
    \end{cases}
\]
In our experiments the set $\Omega$ consists of randomly generated index pairs.
We set $m=n$ and choose $A$ to be a random rank-$k$ matrix, i.e., $A = U_*^{} V_*^\top$, where $U_*,V_*\in\mathbb{R}^{n\times r}$ are random matrices with each element sampled from a standard Gaussian distribution. The number of sampled entries $|\Omega|$ is defined by an oversampling parameter $\texttt{OS}\geq 1$,
\[
    \abs{\Omega} = \texttt{OS} \cdot (2nk - k^2),
\]
since we want $\abs{\Omega}$ to be larger than $2nk - k^2 = \dim(\mathcal M_k)$, which is the number of essential degrees of freedom for an $n \times n$ matrix of rank $k$. In the experiments~$\texttt{OS} = 3$.

In Figure~\ref{fig:completion_random} we present the convergence plots for an experiment with $n=2000$ for several choices of $\omega$ and different ranks $k$. We report the relative residuals
\begin{equation}\label{eq:rel_err}
    \texttt{err}_\ell = \frac{\left\|P_{\Omega} (A - U_\ell^{} V_\ell^\top) \right\|_F}{\left\|P_{\Omega} (A) \right\|_F},
\end{equation}
where the sequence $(U_\ell, V_\ell)$ is generated by Algorithm~\ref{alg:als_qr} with a shift parameter $\omega$.
The only difference with Algorithm~\ref{alg:als_qr} is that we always start with $\omega = 1$ (standard ALS) and only turn on the shift after the convergence has stabilized, in this experiment usually after 12 iterations. The optimal shift $\omega_\mathrm{opt}$ from \eqref{eq:omega_opt} depends on the convergence rate $\beta^2 = \rho_1$ of the standard AO method, which is estimated while running the iteration with $\omega=1$ using
\begin{equation}\label{eq:param}
    \beta^2 \approx \sqrt{\frac{\texttt{err}_{\ell+2}}{\texttt{err}_{\ell}}}.
\end{equation}

As expected, using overrelaxation accelerates the convergence of the ALS method if $\omega$ is chosen properly.
We note that the additional computations arising from the overrelaxation scheme come in asymptotically negligible cost as compared with the basic ALS.
In turn, the proposed approach leads to a significant reduction of the total number of iterations for achieving a high accuracy.

\subsection{Low-rank solution of the Lyapunov equation}

\begin{figure}
\centering
\includegraphics[width=0.6\textwidth]{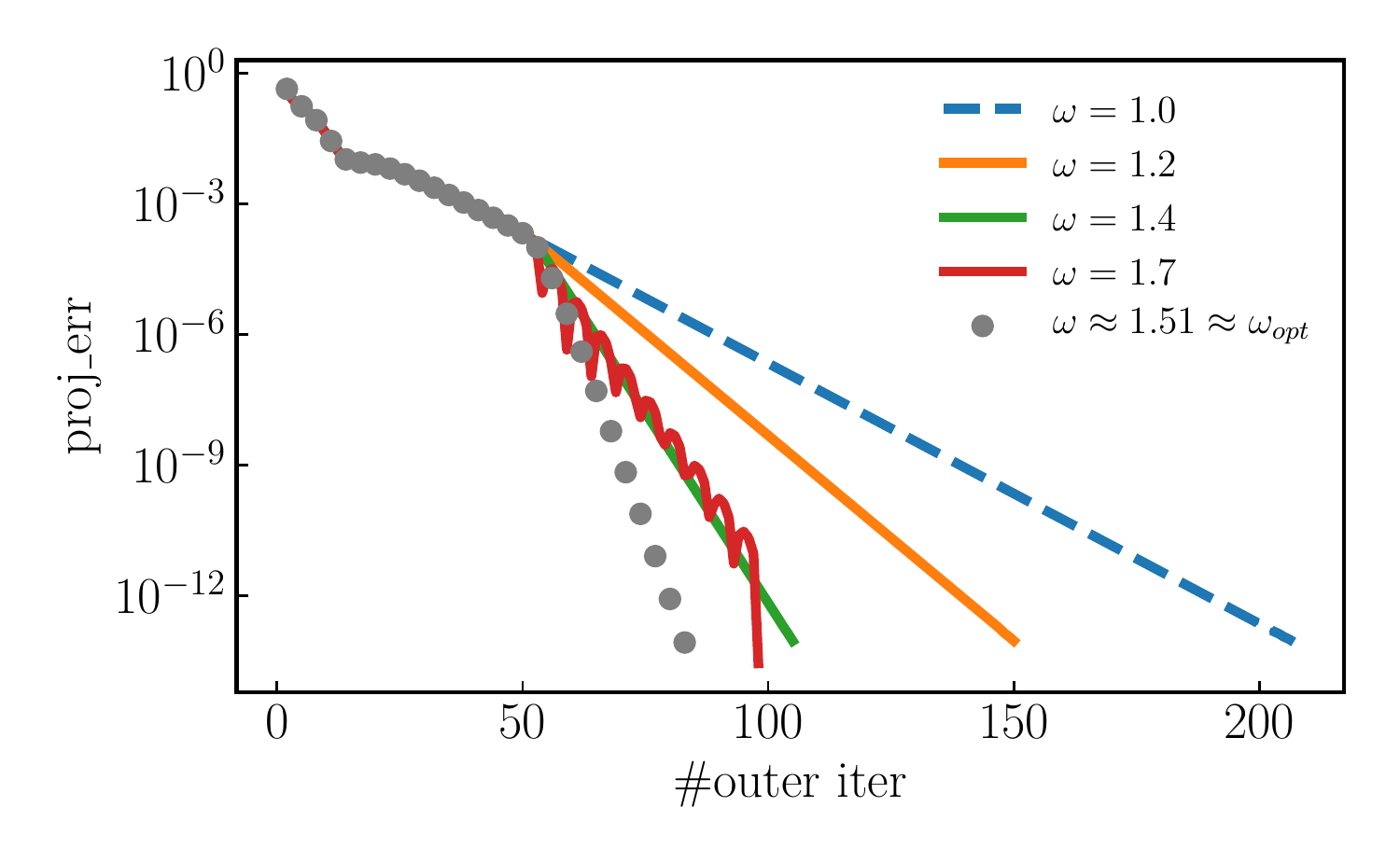}
\caption{
Relative residuals~\eqref{eq:proj_err} of Algorithm~\ref{alg:als_qr} for the low-rank solution of a Lyapunov equation~\eqref{eq:lyapunov} with respect to the number of outer iterations using various shift parameters $\omega$. Here $k=2$. The parameter $\omega=1$ corresponds to the standard ALS method, while parameters $\omega\in(1, 2)$ represent the version of the iteration with overrelaxation. The $\omega \approx \omega_{\mathrm{opt}}$ case corresponds to the choice~\eqref{eq:omega_opt} with $\beta^2$ estimated using~\eqref{eq:param_proj}.
}
\label{fig:system2d}
\end{figure}

Consider the Lyapunov equation
\begin{equation}\label{eq:lyapunov}
    AX + XA^\top = B, \quad A, B\in\mathbb{R}^{n\times n},
\end{equation}
where $X\in\mathbb{R}^{n\times n}$ is a matrix to be found.
In case of a symmetric positive definite matrix $A$, equation \eqref{eq:lyapunov} represents the optimality condition for the strongly convex optimization problem
\[
   \min_{X \in \R^{n \times n}} f(X) = \frac{1}{2}\left<AX + XA^\top, X\right>_F - \left< B, X \right>_F.
\]
A rank-$k$ approximation to the solution is, therefore, obtained by solving the problem
\[
    \min_{U, V\in\mathbb{R}^{n\times k}} F(U,V) = f\left(UV^\top\right)
\]
instead, which is of the form~\eqref{eq: factorized optimization problem}. For this we employ the proposed overrelaxation algorithm.

In the experiment, we choose $A = (n+1)^2\, \mathrm{tridiag(-1, 2, -1)}$, set $n=256$ and generate the right-hand side $B = AX_* + X_* A^\top$ from a specified solution $X_*$. 
Specifically, we choose $k=2$ and generate the second and the third singular values of $X_*$ such that their ratio equals to $0.99$, which is similar to experiments conducted in~\cite{Oseledets2018}. There it has been numerically observed that such a large ratio at the target singular value can lead to slow convergence of the standard ALS method.

Due to the fact that $X_*$ cannot be approximated with high accuracy using $k=2$, the function values $f(X)$ will not converge to zero and hence cannot be taken as an appropriate error measure. Instead, we compute the values
\begin{equation}\label{eq:proj_err}
    \texttt{proj\_err}_\ell = \frac{\|\mathbf P_{X_\ell} \nabla f(X_\ell) \|_F }{\|\mathbf P_{X_\ell}(B)\|_F} = \frac{\|\mathbf P_{X_\ell}\left(A X_\ell+ X_\ell A^\top - B\right)\|_F}{\|\mathbf P_{X_\ell}(B)\|_F}, \quad X_\ell^{} = U_\ell^{} V_\ell^\top,
\end{equation}
where $\mathbf P_{X_\ell}$ denotes the orthogonal projection operator to the tangent space of the manifold $\mathcal M_k$ of fixed rank-$k$ matrices at $X_\ell$; see, e.g.,~\cite{Vandereycken2013}. This reflects the fact that the method can be regarded as a minimization method on that manifold. Similarly to~\eqref{eq:param}, we can then use these values for approximating the optimal shift parameter $\omega_\mathrm{opt}$ using \eqref{eq:omega_opt} with $\beta^2$ estimated from
\begin{equation}\label{eq:param_proj}
    \beta^2 \approx \sqrt{\frac{\texttt{proj\_err}_{\ell+2}}{\texttt{proj\_err}_{\ell}}}.
\end{equation}

In Figure~\ref{fig:system2d}, we plot the values of $\texttt{proj\_err}_\ell$ against the number of outer iterations~$\ell$ for several values of shifts~$\omega$, including the basic ALS and the approximated optimal shift. In all cases the shift is activated after 50 iterations. All considered shifts lead to convergence improvement with the shift that approximates the optimal one being the best.

\subsection{Linear systems in the quantized tensor train format}

\begin{figure}
\centering
\includegraphics[width=0.6\textwidth]{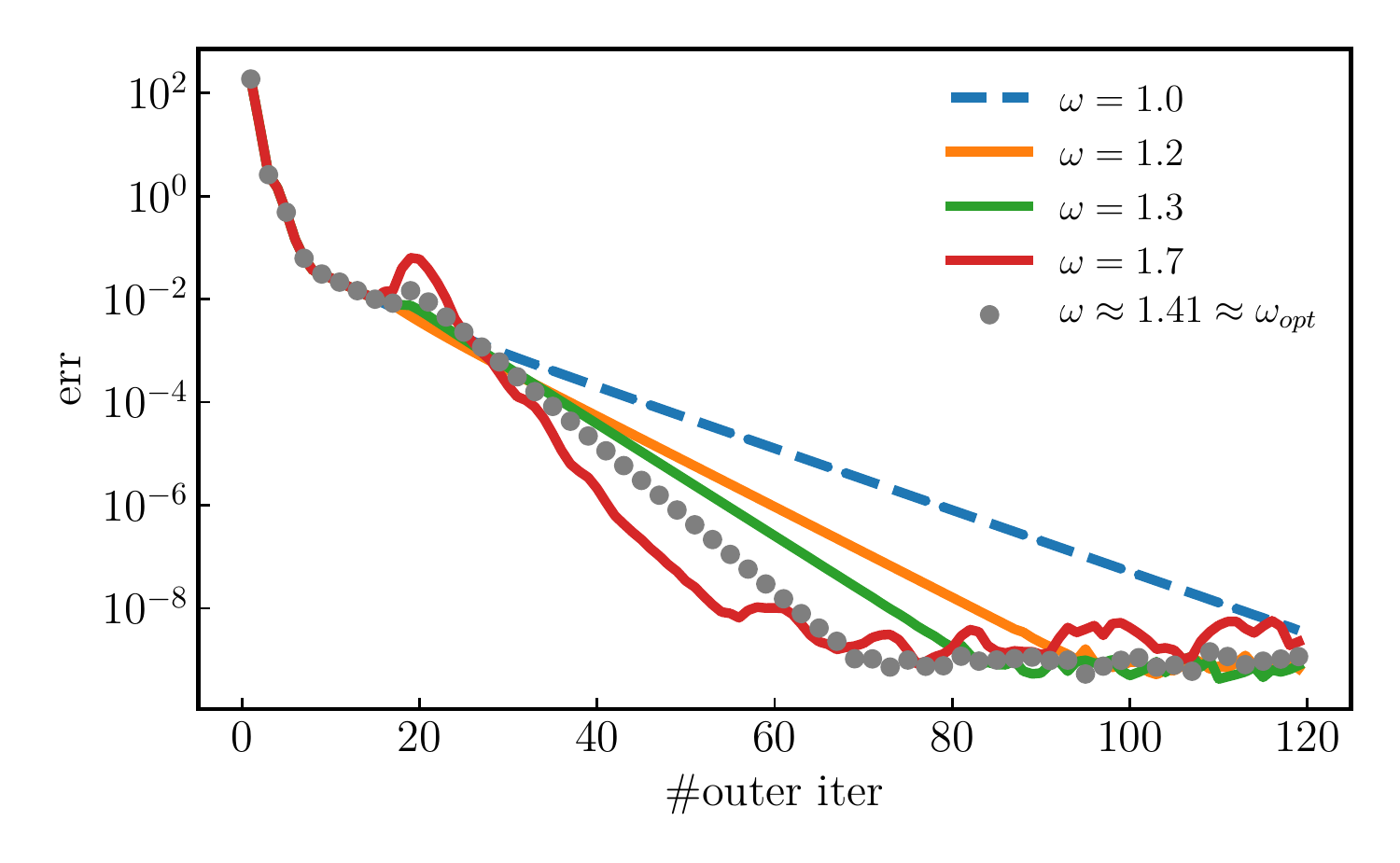}
\caption{Maximal (within one ALS sweep) relative local residual with respect to the number of outer iterations of the QTT ALS method for solving a Lyapunov equation~\eqref{eq:lyapunov} using various shifts $\omega$. The QTT ranks are $(4,\dots,4)$. The parameter $\omega=1$ corresponds to the standard QTT ALS method, while parameters $\omega\in(1, 2)$ represent the iteration with overrelaxation. The $\omega \approx \omega_{\mathrm{opt}}$ case corresponds to~\eqref{eq:omega_opt} with $\beta^2$ estimated from the standard ALS method, even though for tensor problems there is no theoretical guarantee that this choice is actually close to optimal. Shifts are activated after 15 iterations.
}
\label{fig:system_qtt}
\end{figure}

Finally, we test our approach for solving linear systems in the tensor train format. In particular, we apply the so-called quantized tensor train (QTT) format~\cite{Khoromskij2011,Oseledets2010} to solve the equation~\eqref{eq:lyapunov} by fixing $n=2^d$, $d=12$, and by representing $X\in\mathbb{R}^{2^d\times 2^d}$ as order-$2d$ tensors in $\mathbb{R}^{2\times \dots \times 2}$ using reshape in the lexicographical order.
These tensors are then further restricted to the tensor train format with the TT-rank equal to $(4,4,\dots,4)$ (this choice of ranks led to a much slower convergence of the ALS method as compared to other rank values). The right-hand side $B$ was selected to be a matrix of all ones, which trivially admits a QTT representation with all TT-ranks equal to one. Note that all computations were performed directly in the tensor train format, i.e., no full tensors were formed.

As an error measure $\texttt{err}_\ell$ we take the maximum relative norm of all local residuals within one sweep of the standard ALS~\cite{Oseledets2012}. Based on this error we estimate $\beta^2$ and use the same formula~\eqref{eq:param} for $\omega_{\text{\upshape opt}}$, but as noted in section~\ref{sec: low-rank tensor problems} there is no theoretical guarantee that this formula provides the optimal shift parameter in the tensor case. Nevertheless, the results from Figure~\ref{fig:system_qtt} suggest that this choice leads to nearly the fastest convergence among the considered choices of shifts.

\small
\bibliographystyle{plain}
\bibliography{references}

\end{document}